\newtheorem{thm}{Theorem}[section]
\newtheorem*{thm*}{Theorem}
\newtheorem{cor}[thm]{Corollary}
\newtheorem{lem}[thm]{Lemma}
\newtheorem*{lem*}{Lemma}
\newtheorem{mainthm}{Theorem}
\newtheorem*{mainthm*}{Theorem}
\newtheorem{maincor}[mainthm]{Corollary}
\newtheorem{prop}[thm]{Proposition}
\theoremstyle{definition}
\newtheorem*{case*}{Case}
\newtheorem{defn}[thm]{Definition}
\newtheorem*{defn*}{Definition}
\newtheorem*{exmp*}{Example}
\newtheorem{step}{Step}\renewcommand{\thestep}{}
\theoremstyle{remark}
\renewcommand{\thecase}{}
\newtheorem{rmk}[thm]{Remark}
\newtheorem*{rmk*}{Remark}
\def\alphenumi{
  \def\theenumi{\alph{enumi}}
  \def\p@enumi{\theenumi}
  \def\labelenumi{(\@alph\c@enumi)}}
\def\thecase{\@arabic\c@case}
\def\thestep{\@arabic\c@step}
\def\hhmm{\number\hh:\ifnum\mm<10{}0\fi\number\mm}
\let\oldmarginpar\marginpar
\renewcommand\marginpar[1]{\-\oldmarginpar[\raggedleft\footnotesize #1]%
{\raggedright\footnotesize #1}}
\renewcommand\emptyset{\varnothing}
\newcommand\CC{\mathbb{C}}
\newcommand\KK{\mathbb{K}}
\newcommand\PP{\mathbb{P}}
\newcommand\RR{\mathbb{R}}
\newcommand\ZZ{\mathbb{Z}}
\newcommand\cG{{\mathcal{G}}}
\newcommand\fg{{\mathfrak{g}}}
\newcommand\sA{{\mathscr{A}}}
\newcommand\sB{{\mathscr{B}}}
\newcommand\sC{{\mathscr{C}}}
\newcommand\sE{{\mathscr{E}}}
\newcommand\sH{{\mathscr{H}}}
\newcommand\sW{{\mathscr{W}}}
\newcommand\bx{{\mathbf{x}}}
\newcommand{\cov}{\nabla}
\newcommand\eps{\varepsilon}
\newcommand\GL{\operatorname{GL}}
\newcommand\Or{\operatorname{O}}
\newcommand\SO{\operatorname{SO}}
\newcommand\SU{\operatorname{SU}}
\newcommand\U{\operatorname{U}}
\newcommand\less{\setminus}
\newcommand\ad{{\operatorname{ad}}}
\newcommand\Ad{{\operatorname{Ad}}}
\newcommand\Aut{\operatorname{Aut}}
\newcommand\Coker{\operatorname{Coker}}
\newcommand\dist{\operatorname{dist}}
\newcommand\End{\operatorname{End}}
\DeclareMathOperator{\Inj}{Inj}
\newcommand\Ker{\operatorname{Ker}}
\newcommand\Riem{\operatorname{Riem}}
\newcommand\supp{\operatorname{supp}}
\newcommand\Sym{\operatorname{Sym}}
\newcommand\tr{\operatorname{tr}}
\newcommand\vol{\operatorname{vol}}
\newcommand\Vol{\operatorname{Vol}}
\newcommand\afortiori{{\emph{a fortiori }}}
\newcommand\apriori{{\emph{a priori }}}
\newcommand\deRham{{\mathrm{de Rham}}}
\newcommand\loc{{\mathrm{loc}}}
\numberwithin{equation}{section}
\begin{document}

\title[Energy gap for Yang-Mills connections]{Energy gap for Yang-Mills connections, I: Four-dimensional closed Riemannian manifolds}

\author[Paul M. N. Feehan]{Paul M. N. Feehan}
\address{Department of Mathematics, Rutgers, The State University of New Jersey, 110 Frelinghuysen Road, Piscataway, NJ 08854-8019, United States of America}
\email{feehan@math.rutgers.edu}
\curraddr{School of Mathematics, Institute for Advanced Study, Princeton, NJ 08540}
\email{feehan@math.ias.edu}

\date{This version: April 6, 2016, incorporating final galley proof corrections. \emph{Advances in Mathematics} \textbf{296} (2016), 55--84, doi: 10.1016/j.aim.2016.03.034}

\begin{abstract}
We extend an $L^2$ energy gap result due to Min-Oo \cite[Theorem 2]{Min-Oo_1982} and Parker \cite[Proposition 2.2]{ParkerGauge} for Yang-Mills connections on principal $G$-bundles, $P$, over closed, connected, four-dimensional, oriented, smooth manifolds, $X$, from the case of positive Riemannian metrics to the more general case of good Riemannian metrics, including metrics that are generic and where the topologies of $P$ and $X$ obey certain mild conditions and the compact Lie group, $G$, is $\SU(2)$ or $\SO(3)$.
\end{abstract}


\subjclass[2010]{Primary 58E15, 57R57; secondary 37D15, 58D27, 70S15, 81T13}

\keywords{Anti-self-dual and self-dual connections, Morse theory on Banach manifolds, smooth four-dimensional manifolds, Yang-Mills gauge theory}

\thanks{The author is grateful for the hospitality provided by the Department of Mathematics at Columbia University during the preparation of this article. His research is partially supported by National Science Foundation grant DMS-1510064 and the Oswald Veblen Fund and Fund for Mathematics (Institute for Advanced Study, Princeton).}

\maketitle
\tableofcontents

\section{Introduction}
\label{sec:Introduction}
The purpose of this article is to prove that energies associated to non-minimal Yang-Mills connections on a principal bundle, with compact Lie structure group, over a closed, connected, four-dimensional, oriented, Riemannian, smooth manifold are separated from the energy of the minimal Yang-Mills connections by a uniform positive constant depending at most on the Riemannian metric on the base manifold and the Pontrjagin degree of the principal bundle. In particular, rather than require that the Riemannian metric be positive (see Definition \ref{defn:Freed_Uhlenbeck_page_174_positive_metric}) as assumed by Min-Oo in \cite[Theorem 2]{Min-Oo_1982} and Parker in \cite[Proposition 2.2]{ParkerGauge} and constrain the four-dimensional manifold to have negative definite intersection form, we instead assume that the Lie structure group is $\SU(2)$ or $\SO(3)$ and the Riemannian metric is generic in the sense of \cite{FU}, and impose mild conditions on the topology of the principal bundle and four-dimensional manifold inspired by those employed in the most general definitions of the Donaldson invariants \cite{DonPoly,DK,  KMStructure, MorganMrowkaPoly} of the four-dimensional manifold. Our proof leans heavily on analytical results in Yang-Mills gauge theory developed by the author in \cite{Feehan_yang_mills_gradient_flow_v2} and, given those, our main result follows by adapting the method of Min-Oo in \cite{Min-Oo_1982}. Our companion article \cite{Feehan_yangmillsenergygapflat} establishes an $L^{d/2}$ energy gap for Yang-Mills connections over closed Riemannian manifolds of arbitrary dimension $d \geq 2$.

In Section \ref{subsec:Yang-Mills_energy_functional_as_Morse_function}, we review the essentials of gauge theory over four-dimensional manifolds needed to introduce the Yang-Mills energy functional as a Morse function on the quotient space of connections modulo gauge transformations. We recall the $L^2$ energy gap result of Min-Oo \cite[Theorem 2]{Min-Oo_1982} and Parker \cite[Proposition 2.2]{ParkerGauge} for a Yang-Mills connection over a closed, four-dimensional manifold with a positive Riemannian metric in Section \ref{subsec:Yang-Mills_energy_gap}. We state our generalization of their result in Section \ref{subsec:Main_results}, where the requirement that the Riemannian metric be positive (but the Lie structure group, $G$, is arbitrary) is traded for the requirements that the Riemannian metric be generic and $G$ be $\SU(2)$ or $\SO(3)$, together with mild conditions on the topology of $P$ and $X$. Section \ref{subsec:Outline} provides a guide to the remainder of the article.

\subsection{The Yang-Mills energy functional as a Morse function}
\label{subsec:Yang-Mills_energy_functional_as_Morse_function}
Let $G$ be a compact Lie group and $P$ a principal $G$-bundle over a closed, connected, four-dimensional, oriented, smooth manifold, $X$, with Riemannian metric, $g$, and define the associated Yang-Mills energy functional by
\begin{equation}
\label{eq:Yang-Mills_energy_functional}
\sE_g(A)  := \frac{1}{2}\int_X |F_A|^2\,d\vol_g,
\end{equation}
where $A$ is a connection on $P$ of Sobolev class $W^{k,2}$, for an integer $k\geq 1$, and its curvature is denoted by $F_A \in W^{k-1,2}(X; \Lambda^2\otimes\ad P)$. Here, $\Lambda^p := \Lambda^p(T^*X)$ for integers $p\geq 1$ and $\ad P := P\times_{\ad}\fg$ is the real vector bundle associated to $P$ by the adjoint representation of $G$ on its Lie algebra, $\Ad:G \ni u \to \Ad_u \in \Aut\fg$, and fiber metric defined through the Killing form on $\fg$ (see Section \ref{sec:Taubes_1982_Appendix}).

A connection, $A$ on $P$, is a \emph{critical point} of $\sE_g$ --- and by definition a \emph{Yang-Mills connection}  with respect to the metric $g$ --- if and only if it obeys the \emph{Yang-Mills equation} with respect to $g$,
\begin{equation}
\label{eq:Yang-Mills_equation}
d_A^{*,g}F_A = 0 \quad\hbox{a.e. on } X,
\end{equation}
since $d_A^{*,g}F_A = \sE_g'(A)$ when the gradient of $\sE = \sE_g$ is defined by the $L^2$ metric \cite[Section 6.2.1]{DK}, \cite{GroisserParkerSphere} and $d_A^* = d_A^{*,g}: \Omega^p(X; \ad P) \to \Omega^{p-1}(X; \ad P)$ is the $L^2$ adjoint of the exterior covariant derivative \cite[Section 2.1.2]{DK}, $d_A:\Omega^p(X; \ad P) \to \Omega^{p+1}(X; \ad P)$, for integers $p\geq 0$, where $\Omega^p(X; \ad P) = C^\infty (X; \Lambda^p\otimes\ad P)$.

The energy functional, $\sE_g$, is gauge-invariant and thus descends to a function on the quotient space, $\sB(P,g) := \sA(P)/\Aut P$, of the affine space, $\sA(P)$, of connections on $P$ (of Sobolev class $W^{k,2}$ with $k\geq 2$), modulo the action of the group, $\Aut P$, of automorphisms or gauge transformations (of Sobolev class $W^{k+1,2}$) of the principal $G$-bundle, $P$. See \cite[Section 4.2]{DK} or \cite[Chapter 3]{FU} for constructions of the Banach manifold structures on $\sB^*(P,g)$ and $\Aut P$, where $\sB^*(P,g)\subset \sB(P,g)$ denotes the open subset consisting of gauge-equivalence classes of connections on $P$ whose isotropy group is minimal, namely the center of $G$ \cite[p. 132]{DK}. A fundamental investigation of the extent to which $\sE_g$ serves as a Morse function on $\sB(P,g)$ --- despite non-compactness --- has been provided by Taubes in \cite{TauFrame} (see also \cite{TauPath, TauStable} for related results due to Taubes).

The quotient space, $\sB(P,g)$, is non-compact due to its infinite dimensionality. For any $C^\infty$ connection, $A$, we recall that \cite[Equation (2.1.25)]{DK}
\begin{equation}
\label{eq:Donaldson_Kronheimer_2-1-25}
F_A = F_A^{+,g} + F_A^{-,g} \in \Omega^2(X;\ad P) = \Omega^{+,g}(X;\ad P) \oplus \Omega^{-,g}(X;\ad P),
\end{equation}
corresponding to the positive and negative eigenspaces, $\Lambda^{\pm,g}$, of the Hodge star operator $*_g:\Lambda^2 \to \Lambda^2$ defined by the Riemannian metric, $g$, so \cite[Equation (1.3)]{TauSelfDual}
\begin{equation}
\label{eq:Taubes_1982_1-3}
F_A^{\pm,g} = \frac{1}{2}(1 \pm *_g)F_A \in \Omega^{\pm,g}(X; \ad P).
\end{equation}
Of course, similar observations apply more generally to connections, $A$, of Sobolev class $W^{k,2}$.

When there is no ambiguity, we suppress explicit mention of the underlying Riemannian metric, $g$, and write $*=*_g$, and $d_A^* = d_A^{*,g}$, and $d_A^\pm = d_A^{\pm,g}$, and $\sE = \sE_g$, and $F_A^\pm = F_A^{\pm,g}$, and so on.

If the first Pontrjagin numbers of $P$ are non-positive, the finite-dimensional subvariety, $M(P,g) \subset \sB(P,g)$, of gauge-equivalence classes of solutions to the \emph{anti-self-dual} equation with respect to $g$,
\begin{equation}
\label{eq:Moduli_space_anti-self-dual_connections}
M(P,g) := \left\{[A] \in \sB(P,g): F_A^{+,g} = 0 \quad\hbox{a.e. on } X\right\},
\end{equation}
is non-compact (assuming non-empty) due to the phenomenon of energy bubbling characteristic of Yang-Mills gauge theory over four-dimensional manifolds \cite{DK, FU}. As we recall in Section \ref{sec:Taubes_1982_Appendix}, the variety, $M(P,g)$, comprises the set of absolute minima or `ground states' for $\sE_g$.

If the first Pontrjagin numbers of $P$ are non-negative, the finite-dimensional subvariety, $M(P,g) \subset \sB(P,g)$, of gauge-equivalence classes, $[A]$, of solutions to the self-dual equation with respect to the metric $g$, namely $F_A^{-,g} = 0$ a.e. on $X$, again comprises the set of absolute minima for $\sE_g$. By reversing the orientation of $X$, we may restrict our attention without loss of generality to the case where the first Pontrjagin numbers of $P$ are non-positive.

By a result due to Sibner, Sibner, and Uhlenbeck \cite{SibnerSibnerUhlenbeck}, there exist non-minimal critical points of the Yang-Mills energy functional on $P = S^4\times \SU(2)$ and, more generally, principal $\SU(2)$-bundles, $P$, over $S^4$ for any $c_2(P) \geq 2$ by work of Bor and Montgomery \cite{Bor_1992, Bor_Montgomery_1990}, Sadun and Segert \cite{Sadun_1994, Sadun_Segert_1991, Sadun_Segert_1992cmp, Sadun_Segert_1992cpam, Sadun_Segert_1993}, and other four-dimensional manifolds by work of Gritsch \cite{Gritsch_2000} and Parker \cite{Parker_1992invent}.

\subsection{Gap between energies of absolute minima and non-minimal critical points}
\label{subsec:Yang-Mills_energy_gap}
In view of the preceding discussion and the non-compactness of $\sB(P,g)$ and $M(P,g)$, it is natural to ask whether or not there is a  \emph{positive uniform gap} between the energy, $\sE_g(A)$, of points $[A]$ in the stratum, $M(P,g)$, of absolute minima of $\sE_g$ on $\sB(P,g)$ and energies of points in the strata in $\sB(P,g)$ of non-minimal critical points.

The earliest result of this kind is due to Bourguignon and Lawson \cite[Theorem D]{Bourguignon_Lawson_1981} (see also their article \cite{Bourguignon_Lawson_Simons_1979} with Simons) and that asserts that if $A$ is a Yang-Mills connection on a principal $G$-bundle over $S^4$ (with its standard round metric of radius one) such that
\[
\|F_A^+\|_{L^\infty(S^4)} < \sqrt{3},
\]
then $A$ is necessarily anti-self-dual. Their result was significantly improved by Min-Oo \cite[Theorem 2]{Min-Oo_1982} and Parker \cite[Proposition 2.2]{ParkerGauge}, by replacing the preceding $L^\infty$ condition with an $L^2$ energy condition,
\[
\|F_A^+\|_{L^2(X)} \leq \eps,
\]
where $\eps = \eps(g) \in (0,1]$ is a small enough constant and by allowing $X$ to be any closed, four-dimensional, smooth manifold equipped with a Riemannian metric, $g$, that is \emph{positive} in the sense of Definition \ref{defn:Freed_Uhlenbeck_page_174_positive_metric} below. (Min-Oo and Dodziuk \cite{Dodziuk_Min-Oo_1982}, Shen \cite{Shen_1982}, and Xin \cite{Xin_1984} also established $L^2$ energy gap results in the case of four-dimensional, non-compact, smooth manifolds equipped with complete, positive Riemannian metrics.)

\begin{rmk}[Anti-self-dual connections over the four-dimensional sphere]
\label{rmk:ASD_connections_over_4-sphere}
Anti-self-dual connections over the four-dimensional sphere, $S^4$, with its standard round metric of radius one, were completely classified and explicitly identified by Atiyah, Drinfeld, Hitchin, and Manin \cite{AtiyahGeomYM, ADHM}, for many compact Lie structure groups, $G$.
\end{rmk}

\begin{rmk}[Anti-self-dual connections over four-dimensional manifolds]
\label{rmk:ASD_connections_over_4-manifolds}
While Sedlacek \cite{Sedlacek} had employed methods of Uhlenbeck \cite{UhlLp, UhlRem}, to give a `direct method' for minimizing the Yang-Mills energy functional, the question of existence of anti-self-dual connections over four-dimensional manifolds other than $S^4$ was not settled until the advent of the work of Taubes \cite{TauSelfDual, TauIndef} and Donaldson \cite{DonASD}, drawing respectively on methods of non-linear elliptic partial differential equations and topology in the first case and on Yang-Mills gradient flow and complex analysis in the second case. Constructions of anti-self-dual connections over $\CC\PP^2$ with its Fubini-Study metric were provided by Buchdahl \cite{Buchdahl_1986} and Donaldson \cite{Donaldson_1985geomtoday}.
\end{rmk}

The key step in the proof of Min-Oo's \cite[Theorem 2]{Min-Oo_1982} is to derive an \apriori estimate (see \cite[Equation (3.15)]{Min-Oo_1982} or Corollary \ref{cor:Feehan_Leness_6-6} below) for the $W_A^{1,2}(X)$-norm of $v \in W_A^{1,2}(X; \Lambda^+\otimes\ad P)$ in terms of the $L^2(X)$-norm of $d_A^{+,*_g}v \in L^2(X; \Lambda^+\otimes\ad P)$. (See \eqref{eq:Sobolev_norms} below for definitions of Sobolev norms and spaces.) To obtain this estimate, Min-Oo employs a Bochner-Weitzenb\"ock formula for the Laplace operator, $d_A^+d_A^{+,*}$, an \apriori estimate due to P. Li \cite{Li_1980}, and the positivity condition \eqref{eq:Freed_Uhlenbeck_page_174_positive_metric} for the Riemannian metric, $g$, on $X$.

While Min-Oo's \cite[Theorem 2]{Min-Oo_1982} and Parker's \cite[Proposition 2.2]{ParkerGauge} hold for any compact Lie group, $G$, the positivity condition \eqref{eq:Freed_Uhlenbeck_page_174_positive_metric} on the Riemannian metric, $g$, imposes a strong restriction on the topology of $X$. Indeed, that positivity condition is obeyed in the case of $S^4$ with its standard round metric but there are topological obstructions to the existence of such positive Riemannian metrics on closed, four-dimensional manifolds: a necessary topological condition is that $b^+(X)=0$, where $b^\pm(X)$ denote the dimensions of the maximal positive and negative subspaces for the intersection form, $Q_X$, on $H_2(X;\RR)$ \cite[Section 1.1.1]{DK}; see Atiyah, Hitchin, and Singer \cite{AHS}, Taubes \cite{TauSelfDual, TauIndef}, or Remark \ref{rmk:Positive_metric_implies_b+_is_zero} below. However, an examination of Min-Oo's proof of his \cite[Theorem 2]{Min-Oo_1982} indicates that his key \apriori estimate \cite[Equation (3.15)]{Min-Oo_1982} is a consequence of a positive lower bound for the first eigenvalue of the Laplacian \cite[Equation (3.1)]{Min-Oo_1982},
\[
d_A^{+,g}d_A^{+,*_g} = \frac{1}{2}\left(d_Ad_A^{*_g} + d_A^{*_g}d_A\right)
\quad\hbox{ on } W_A^{2,2}(X; \Lambda^+\otimes\ad P) \subset L^2(X; \Lambda^+\otimes\ad P),
\]
that is \emph{uniform} with respect to $[A] \in \sB(P,g)$ obeying $\|F_A^{+,g}\|_{L^2(X)} < \eps$, where $\eps=\eps(g) \in (0,1]$.

The purpose of this article is to show that the positivity condition \eqref{eq:Freed_Uhlenbeck_page_174_positive_metric} on the Riemannian metric, $g$, may be replaced by a combination of mild conditions on the topology of $P$ and $X$, restriction of the Lie group, $G$, to $\SU(2)$ or $\SO(3)$, and genericity of the Riemannian metric, $g$.

\subsection{Main results}
\label{subsec:Main_results}
Because there are many potential combinations of conditions on $G$, $P$, $X$, and $g$ which imply that $\Coker d_A^{+,g} = 0$ when $A$ is anti-self-dual with respect to the Riemannian metric, $g$, it is convenient to introduce the

\begin{defn}[Good Riemannian metric]
\label{defn:Good_Riemannian_metric}
Let $G$ be a compact, simple Lie group, $X$ be a closed, connected, four-dimensional, oriented, smooth manifold, and $\eta \in H^2(X;\pi_1(G))$ be an obstruction class. We say that a Riemannian metric, $g$, on $X$ is \emph{good} if for every principal $G$-bundle, $P$, over $X$ with $\eta(P) = \eta$ and non-negative Pontrjagin degree, $\kappa(P)$, in \eqref{eq:Taubes_1982_A6} and every connection, $A$, of Sobolev class $W^{1,2}$ on $P$ with $F_A^{+,g}=0$ a.e. on $X$, then $\Coker d_A^{+,g} = 0$.
\end{defn}

See Section \ref{sec:Taubes_1982_Appendix} for the classification of principal $G$-bundles over closed, four-dimensional manifolds, $X$, when $G$ is a compact, simple Lie group. First, we observe that the proof of \cite[Theorem 2]{Min-Oo_1982}, with a minor change described in Section \ref{sec:Good_Riemannian_metrics_and_energy_gap_for_Yang-Mills_connections}, yields the following generalization.

\begin{mainthm}[$L^2$ energy gap for Yang-Mills connections over a four-dimensional manifold with a good Riemannian metric]
\label{mainthm:Min-Oo_2}
Let $G$ be a compact, simple Lie group and $P$ be a principal $G$-bundle over a closed, connected, four-dimensional, oriented, smooth manifold, $X$, with Riemannian metric, $g$, that is good in the sense of Definition \ref{defn:Good_Riemannian_metric}. Then there is a constant, $\eps = \eps(g,\kappa(P)) \in (0, 1]$, with the following significance. If $A$ is a Yang-Mills connection of Sobolev class $W^{2,2}$ on $P$, with respect to the metric $g$, such that
\begin{equation}
\label{eq:Minoo_theorem_2}
\|F_A^{+,g}\|_{L^2(X,g)} < \eps,
\end{equation}
then $A$ is anti-self-dual with respect to the metric $g$, that is, $F_A^{+,g} = 0$ a.e. on $X$.
\end{mainthm}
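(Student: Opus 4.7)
The plan follows Min-Oo's strategy, the crux of which is an \apriori estimate of the form
\[
\|v\|_{W^{1,2}_A(X)} \;\leq\; C\,\|d_A^{+,g,*} v\|_{L^2(X)}, \qquad v \in W^{1,2}_A(X;\Lambda^{+,g}\otimes\ad P),
\]
with constant $C = C(g,\kappa(P))$ uniform over all connections $A$ of Sobolev class $W^{2,2}$ on $P$ satisfying $\|F_A^{+,g}\|_{L^2(X)}<\eps$. Once this estimate is available, the theorem is immediate: the Bianchi identity $d_A F_A = 0$, the Yang-Mills equation $d_A^{*,g}F_A = 0$, and the identity $d_A^{*,g} = -*_g d_A *_g$ on two-forms combine to give
\[
d_A^{+,g,*}F_A^{+,g} \;=\; d_A^{*,g}F_A^{+,g} \;=\; -\tfrac{1}{2}*_g d_A F_A \;=\; 0,
\]
so applying the estimate to $v = F_A^{+,g}$ forces $F_A^{+,g}=0$ a.e.\ on $X$.

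To obtain the uniform estimate, I would begin, as in Min-Oo, from the Bochner-Weitzenb\"ock formula for $d_A^{+,g}d_A^{+,g,*}$ on $\Omega^{+,g}(X;\ad P)$ that expresses this Laplacian as $\tfrac{1}{2}\nabla_A^*\nabla_A$ plus a zero-order term involving the Riemannian curvature of $(X,g)$ and $F_A^{+,g}$. Min-Oo bounds this zero-order term below pointwise using positivity of the metric; that step is unavailable here. The substitute ingredient is the good-metric hypothesis, which guarantees that at every anti-self-dual connection $A_0$ on every principal $G$-bundle $P'$ over $X$ with $\eta(P')=\eta(P)$ and $0 \leq \kappa(P') \leq \kappa(P)$, one has $\Ker d_{A_0}^{+,g,*} = \Coker d_{A_0}^{+,g} = 0$, hence a strictly positive first eigenvalue $\lambda(A_0) > 0$ for $d_{A_0}^{+,g}d_{A_0}^{+,g,*}$.

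The main obstacle is to propagate this positivity uniformly from the anti-self-dual locus to all connections $A$ with $\|F_A^{+,g}\|_{L^2(X)} < \eps$, despite the non-compactness of $M(P,g)$ due to energy bubbling. My plan is to argue by contradiction: if the estimate fails, extract sequences $\{A_n\}$ with $\|F_{A_n}^{+,g}\|_{L^2(X)} \to 0$ and $\{v_n\}$ with $\|v_n\|_{L^2(X)} = 1$ and $\|d_{A_n}^{+,g,*} v_n\|_{L^2(X)} \to 0$. By Uhlenbeck's weak compactness theorem together with the Coulomb gauge-fixing and removable-singularity machinery from \cite{Feehan_yang_mills_gradient_flow_v2}, a subsequence converges modulo gauge, after bubbling, to an anti-self-dual limit $A_\infty$ on a bundle $P_\infty$ with $0\leq\kappa(P_\infty)\leq\kappa(P)$ and $\eta(P_\infty)=\eta(P)$, while $v_n$ converges to some $v_\infty \in \Ker d_{A_\infty}^{+,g,*}$. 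The good-metric hypothesis forces $v_\infty = 0$, so the $L^2$-mass of $v_n$ must concentrate at the bubble points; but the Weitzenb\"ock formula combined with small-energy elliptic estimates from \cite{Feehan_yang_mills_gradient_flow_v2} rules out such concentration, yielding the contradiction. Standard elliptic regularity for $d_A^{+,g}d_A^{+,g,*}$ then upgrades the $L^2$ estimate to the full $W^{1,2}_A$ estimate, completing the proof.
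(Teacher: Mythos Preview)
Your proposal is correct and follows essentially the same route as the paper: the paper packages your contradiction argument as a positive statement---a uniform lower bound $\mu_g(A)\geq\mu_0/2$ on the least eigenvalue of $d_A^{+,g}d_A^{+,g,*}$ for all $A$ with $\|F_A^{+,g}\|_{L^2}<\eps$ (Theorem~\ref{thm:Good_metric_implies_positive_lower_bound_small_eigenvalues_almost_ASD}), proved via Uhlenbeck compactness and continuity of $[A]\mapsto\mu_g(A)$ in the Uhlenbeck topology (Proposition~\ref{prop:Lp_loc_continuity_least_eigenvalue_wrt_connection}, whose Steps~2--3 are exactly your ``no concentration at bubble points'' step)---then combines this with the elliptic estimate of Lemma~\ref{lem:Feehan_Leness_6-6} to get Corollary~\ref{cor:Feehan_Leness_6-6}, and applies it to $v=F_A^{+,g}$. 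One minor slip: your displayed identity $d_A^{*,g}F_A^{+,g}=-\tfrac{1}{2}*_g d_A F_A$ drops a term; the correct computation (as in the paper) is $d_A^{+,*_g}F_A^{+,g}=-\tfrac{1}{2}*_g d_A*_g(1+*_g)F_A=\tfrac{1}{2}d_A^{*_g}F_A$, using Bianchi to kill one summand and Yang--Mills to kill the other.
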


When $g$ is positive in the sense of Definition \ref{defn:Freed_Uhlenbeck_page_174_positive_metric}, then the forthcoming Lemma \ref{lem:Positive_metric_implies_positive_lower_bound_small_eigenvalues} implies that $g$ is good in the sense of Definition \ref{defn:Good_Riemannian_metric} and therefore Min-Oo's \cite[Theorem 2]{Min-Oo_1982} and Parker's \cite[Proposition 2.2]{ParkerGauge} are corollaries of Theorem \ref{mainthm:Min-Oo_2}. Alternatively, we may assume that $g$ is good in the sense of Definition \ref{defn:Good_Riemannian_metric} if one of the sets of conditions in Corollaries \ref{cor:Least_eigenvalue_function_positive_on_Uhlenbeck_compactification_pi1_X} or \ref{cor:Least_eigenvalue_function_positive_on_Uhlenbeck_compactification} are obeyed. Let $\pi_1(X)$ denote the fundamental group of $X$.

\begin{maincor}[$L^2$ energy gap for Yang-Mills connections over a four-dimensional manifold with a generic Riemannian metric]
\label{maincor:Min-Oo_2}
Let $G$ be a compact, simple Lie group and $P$ be a principal $G$-bundle with $\kappa(P)\geq 0$ over a closed, connected, four-dimensional, oriented, smooth manifold, $X$.  Then there is an open dense subset, $\sC(X,\kappa(P))$, of the Banach space, $\sC(X)$, of conformal equivalence classes, $[g]$, of $C^r$ Riemannian metrics on $X$ (for some integer $r\geq 3$) with the following significance. If $[g] \in \sC(X,\kappa(P))$, then there is a constant $\eps = \eps(g,\kappa(P)) \in (0, 1]$ such that the following holds. Suppose that $G$, $P$, and $X$ obey \emph{one} of the following sets of conditions:
\begin{enumerate}
\item $b^+(X) = 0$, the group $\pi_1(X)$ has no non-trivial representations in $G$, and $G = \SU(2)$ or $G = \SO(3)$; or

\item $b^+(X) > 0$, the group $\pi_1(X)$ has no non-trivial representations in $G$, and $G = \SO(3)$, and the second Stiefel-Whitney class, $w_2(P) \in H^2(X;\ZZ/2\ZZ)$, is non-trivial; or

\item $b^+(X) \geq 0$, and $G = \SO(3)$, and no principal $\SO(3)$-bundle $P_l$ over $X$ appearing in the Uhlenbeck compactification, $\bar M(P,g)$ in \eqref{eq:Uhlenbeck_compactification}, admits a flat connection.
\end{enumerate}
If $A$ is a Yang-Mills connection on $P$, with respect to the metric $g$, of Sobolev class $W^{2,2}$ that obeys \eqref{eq:Minoo_theorem_2}, then $A$ is anti-self-dual with respect to $g$, that is, $F_A^{+,g} = 0$ a.e. on $X$.
\end{maincor}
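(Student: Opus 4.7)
The plan is to reduce the corollary to Theorem \ref{mainthm:Min-Oo_2} by verifying that, under each of the three sets of topological hypotheses, a generic conformal class $[g] \in \sC(X,\kappa(P))$ yields a \emph{good} Riemannian metric in the sense of Definition \ref{defn:Good_Riemannian_metric}. Once that verification is in place, the $L^2$ energy gap follows immediately by applying Theorem \ref{mainthm:Min-Oo_2}, with the same constant $\eps = \eps(g,\kappa(P))$.

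To show that $g$ is good, I need to verify that $\Coker d_A^{+,g} = 0$ for every anti-self-dual connection $A$ on every principal $G$-bundle $P'$ with $\eta(P') = \eta(P)$ and Pontrjagin degree satisfying $0 \leq \kappa(P') \leq \kappa(P)$ -- the latter range being exactly that of the bundles $P_l$ appearing in the Uhlenbeck compactification $\bar M(P,g)$ of \eqref{eq:Uhlenbeck_compactification}. By the Freed-Uhlenbeck transversality theorem \cite{FU}, there is an open dense subset of conformal classes on $X$ for which, on each such $P'$, the moduli space of \emph{irreducible} anti-self-dual connections $M(P',g)^*$ is a smooth manifold of the expected dimension; in particular, $\Coker d_A^{+,g} = 0$ for every $[A] \in M(P',g)^*$. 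Defining $\sC(X,\kappa(P))$ as the finite intersection (over $l = 0,1,\ldots,\kappa(P)$) of these generic subsets keeps it open and dense in $\sC(X)$, and reduces the proof to excluding reducible anti-self-dual connections on every $P_l$.

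This is where the three sets of hypotheses enter, and they correspond precisely to the conditions listed in Corollaries \ref{cor:Least_eigenvalue_function_positive_on_Uhlenbeck_compactification_pi1_X} and \ref{cor:Least_eigenvalue_function_positive_on_Uhlenbeck_compactification}. In case (1), the hypothesis $b^+(X) = 0$ together with $G = \SU(2)$ or $\SO(3)$ rules out non-flat reducible anti-self-dual connections (which would force a splitting of $\ad P'$ and produce a non-trivial class in $H^2_+(X;\RR)$), while the absence of non-trivial homomorphisms $\pi_1(X) \to G$ rules out flat connections on $P'$. In case (2), the non-triviality of $w_2(P')$ obstructs any reduction of $P'$ to a $\U(1)$-subbundle and again the $\pi_1(X)$ hypothesis forbids flat connections. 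Case (3) imposes the absence of flat connections directly at every stratum $P_l$. In each case one concludes $M(P_l,g) = M(P_l,g)^*$ and thus $\Coker d_A^{+,g} = 0$ for every anti-self-dual $A$ on every $P_l$, so $g$ is good.

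The main technical obstacle is to package these genericity statements coherently across the entire Uhlenbeck compactification rather than the top stratum alone; this requires ensuring (i) that the topological conditions imposed on $P$ transfer to every $P_l$ with the same $\eta$-invariant, which uses the classification in Section \ref{sec:Taubes_1982_Appendix}, and (ii) that the genericity set $\sC(X,\kappa(P))$ is obtained as a finite -- hence still open and dense -- intersection. With both verified, the hypotheses of Definition \ref{defn:Good_Riemannian_metric} are met, Theorem \ref{mainthm:Min-Oo_2} applies, and the conclusion follows.
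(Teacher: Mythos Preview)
Your proposal is correct and follows essentially the same route as the paper: the paper's proof simply invokes Theorem~\ref{mainthm:Min-Oo_2} together with the uniform positive lower bound on $\mu_g(A)$ supplied by Corollaries~\ref{cor:Least_eigenvalue_function_positive_on_Uhlenbeck_compactification_pi1_X} and~\ref{cor:Least_eigenvalue_function_positive_on_Uhlenbeck_compactification}, and you have correctly identified that the three sets of hypotheses match exactly those corollaries and that their role is to certify that a generic $g$ is good in the sense of Definition~\ref{defn:Good_Riemannian_metric}. Your additional unpacking of why each hypothesis excludes reducible or flat anti-self-dual connections on the lower strata is the content behind those corollaries (drawn from \cite{FU}, \cite{DK}, and \cite{KMStructure}); one small imprecision is that in case~(2) non-triviality of $w_2$ does not by itself obstruct a reduction of structure group---rather, it combines with genericity of $g$ and $b^+(X)>0$ to rule out reducible anti-self-dual connections---but this does not affect the validity of the argument since you are ultimately deferring to the cited corollaries.
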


\begin{rmk}[On the hypotheses in Corollary \ref{maincor:Min-Oo_2} on $g$, $G$, $P$, and $X$]
\label{rmk:Assumptions_for_definition_Donaldson_invariants}
When $b^+(X)>1$ and $b^+(X)-b_1(X)$ is odd, where $b_1(X)$ is the first Betti number of $X$, the sets of conditions in Corollary \ref{maincor:Min-Oo_2} on the Riemannian metric, $g$, Lie group, $G$, principal $G$-bundle, $P$, and four-dimensional manifold, $X$, are those customarily employed in the most general definition of the Donaldson invariants \cite{DonPoly,DK,  KMStructure, MorganMrowkaPoly} of $X$. The possibility that Corollary \ref{maincor:Min-Oo_2} should hold is hinted at in a parenthetical remark by Taubes \cite[p. 191, second last paragraph]{TauStable}.
\end{rmk}

\begin{rmk}[Exclusion of flat connections in the Uhlenbeck compactification of $M(P,g)$]
\label{rmk:Morgan-Mrowka_hypothesis_no_flat_connection_in_Uhlenbeck_compactification}
Despite its technical nature, the alternative `no flat connection in $\bar M(P,g)$' condition in Corollary \ref{maincor:Min-Oo_2} is easy to achieve in practice, albeit at the cost of blowing up the given four-manifold, $X$, and modifying the given principal $G$-bundle, $P$. Indeed, we recall the following facts discussed in \cite{MorganMrowkaPoly}:
\begin{enumerate}
\item If $H_1(X;\ZZ)$ has no $2$-torsion, then every principal $\SO(3)$-bundle, $\bar P$, over $X$ lifts to a principal $\U(2)$-bundle $P$, every $\SO(3)$-connection on $\bar P$ lifts to a $\U(2)$-connection on $P$, and every $\SO(3)$-gauge transformation lifts to an $\SU(2)$-gauge transformation \cite[Remark (ii), p. 225]{MorganMrowkaPoly};

\item If $\widetilde X$ is the connected sum, $X \# \CC\PP^2$, and $\widetilde P$ is the connected sum of a principal $\U(2)$-bundle, $P$, over $X$ with the principal $\U(2)$-bundle, $Q$, over $\CC\PP^2$ with $c_2(Q)=0$ and $c_1(Q) \in H^2(\CC\PP^2; \ZZ)$ equal to the Poincar\'e dual of $e = [\CC\PP^1] \in H_2(\CC\PP^2; \ZZ)$, and $\bar P$ is the principal $\SO(3)$-bundle associated to $\widetilde P$, then the following holds: No principal $\SO(3)$-bundle $P'$ over $X$ with $w_2(P') = w_2(\bar P)$ admits a flat connection
    \cite[Paragraph prior to Corollary 2.2]{MorganMrowkaPoly}.
\end{enumerate}
\end{rmk}

\begin{rmk}[Extensions to complete, four-dimensional Riemannian manifolds]
\label{rmk:Complete_Riemannian_4-manifolds}
It may be possible to extend Theorem \ref{mainthm:Min-Oo_2} and Corollary \ref{maincor:Min-Oo_2} to  to the case of a
complete, non-compact four-dimensional Riemannian manifold, thus generalizing \cite[Theorem 2]{Dodziuk_Min-Oo_1982} due to Dodziuk and Min-Oo.
\end{rmk}

Lastly, we note that we establish the following result in our companion article \cite{Feehan_yangmillsenergygapflat} by methods that are entirely different from those employed in our present article.

\begin{thm}[$L^{d/2}$-energy gap for Yang-Mills connections]
\label{thm:Main_flat}
(See \cite[Theorem 1]{Feehan_yangmillsenergygapflat}.)
Let $G$ be a compact Lie group and $P$ be a principal $G$-bundle over a closed, smooth manifold, $X$, of dimension $d \geq 2$ and endowed with a smooth Riemannian metric, $g$. Then there is a positive constant, $\eps = \eps(d,g, G) \in (0, 1]$, with the following significance.
If $A$ is a smooth Yang-Mills connection on $P$ with respect to the metric, $g$, and its curvature, $F_A$, obeys
\begin{equation}
\label{eq:Curvature_Ldover2_small}
\|F_A\|_{L^{d/2}(X)} < \eps,
\end{equation}
then $A$ is a flat connection.
\end{thm}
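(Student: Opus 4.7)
The plan is to establish the $L^{d/2}$ energy gap by combining a Bochner-Weitzenb\"ock identity for the curvature of a Yang-Mills connection with a Moser iteration that upgrades $L^{d/2}$ control into pointwise control, and then to close the argument via an Uhlenbeck gauge-fixing plus a Lojasiewicz-Simon gradient inequality for the Yang-Mills functional. First, since $A$ is Yang-Mills we have $d_A^{*,g}F_A=0$, while the Bianchi identity gives $d_AF_A=0$, so $F_A\in\Omega^2(X;\ad P)$ is annihilated by the Hodge-type Laplacian $(d_A+d_A^{*,g})^2$. The Bochner-Weitzenb\"ock formula rewrites this as
\[
\nabla_A^*\nabla_A F_A + \mathfrak{R}_g F_A + \{F_A,F_A\} = 0,
\]
where $\mathfrak{R}_g$ is a bundle endomorphism depending linearly on the Riemann tensor of $g$ and $\{F_A,F_A\}$ is a pointwise bilinear expression in $F_A$. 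Pairing with $F_A$ and applying Kato's inequality $|\nabla|F_A||\le|\nabla_A F_A|$ yields a pointwise subelliptic inequality of the schematic form
\[
|F_A|\,\Delta_g|F_A| \le C(g)|F_A|^2 + C(d,G)|F_A|^3,
\]
with an appropriate sign convention for $\Delta_g$.

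Second, I would run a standard Moser iteration on this subelliptic inequality. The exponent $d/2$ is precisely the scale-critical one at which the cubic term can be absorbed into the quadratic one using H\"older together with the Sobolev embedding $W^{1,2}(X)\hookrightarrow L^{2d/(d-2)}(X)$ (with the obvious modification when $d=2$). Iterating with cutoff functions centered at arbitrary points of $X$ and applying a finite covering argument on the closed manifold then produces constants $\eps_0=\eps_0(d,g,G)\in(0,1]$ and $C=C(d,g,G)$ such that
\[
\|F_A\|_{L^\infty(X)} \le C\|F_A\|_{L^{d/2}(X)} \quad\text{whenever}\quad \|F_A\|_{L^{d/2}(X)}<\eps_0.
\]

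Third, with $\|F_A\|_{L^\infty}$ arbitrarily small, Uhlenbeck's local gauge-fixing theorem provides Coulomb-gauge representatives of $A$ on balls of fixed radius whose $W^{1,p}$ norms are controlled by $\|F_A\|_{L^{d/2}}$, and elliptic bootstrapping via the Yang-Mills equation in Coulomb gauge promotes these to smooth bounds. Arguing by contradiction, a hypothetical sequence of smooth Yang-Mills connections $A_n$ on principal $G$-bundles $P_n\to X$ with $\|F_{A_n}\|_{L^{d/2}}\to 0$ and $F_{A_n}\not\equiv 0$ would converge smoothly, after gauge transformations and bundle isomorphisms, to a flat connection $A_\infty$ on some limit bundle; a Lojasiewicz-Simon gradient inequality for the real-analytic functional $\sE_g$ at the critical point $A_\infty$ would then force $\sE_g(A_n)=\sE_g(A_\infty)=0$ for large $n$, a contradiction. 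The main obstacle is precisely this third step: the Weitzenb\"ock-Kato-Moser chain yields only smallness of $F_A$, and the integrated Weitzenb\"ock estimate
\[
\int_X|\nabla_A F_A|^2 \le \bigl(C(g)+C\|F_A\|_{L^\infty}\bigr)\|F_A\|_{L^2}^2
\]
fails to close by itself, since no universal Poincar\'e-type inequality is available on $\ad P$-valued harmonic $2$-forms when the moduli space of flat connections on $P$ is positive-dimensional. Bridging from smallness to vanishing therefore requires the full Lojasiewicz-Simon machinery for the Yang-Mills functional in arbitrary dimension, which is what makes this argument genuinely different from the Min-Oo/Parker approach of the present paper.
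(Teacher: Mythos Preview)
Your proposal is correct and aligns with the approach the paper itself attributes to the companion article where this theorem is actually proved. The present paper does not give a proof of Theorem~\ref{thm:Main_flat}; it only states it and explains that the argument in \cite{Feehan_yangmillsenergygapflat} proceeds by contradiction via the {\L}ojasiewicz--Simon gradient inequality for the Yang--Mills functional, which is precisely the mechanism you invoke in your third step to pass from smallness of $F_A$ to vanishing. Your preparatory steps---the Bochner--Weitzenb\"ock identity for harmonic $\ad P$-valued two-forms, Kato plus Moser iteration to upgrade $L^{d/2}$ to $L^\infty$ control, and Uhlenbeck gauge fixing to extract a smoothly convergent subsequence---are the standard and correct way to place a hypothetical sequence of non-flat Yang--Mills connections with vanishing $L^{d/2}$ energy into a fixed {\L}ojasiewicz--Simon neighborhood of a flat limit, at which point the inequality $|\sE_g(A_n)-\sE_g(A_\infty)|^{1-\theta}\le C\|\sE_g'(A_n)\|=0$ forces $F_{A_n}\equiv 0$. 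You also correctly diagnose why the Weitzenb\"ock estimate alone cannot close the argument when the flat moduli space is positive-dimensional.
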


Previous Yang-Mills energy gap results related to Theorem \ref{thm:Main_flat} --- due to Bourguignon, Lawson, and Simons \cite{Bourguignon_Lawson_1981, Bourguignon_Lawson_Simons_1979}, Dodziuk and Min-Oo \cite{Dodziuk_Min-Oo_1982, Min-Oo_1982}, Donaldson and Kronheimer \cite{DK}, Gerhardt \cite{Gerhardt_2010}, Shen \cite{Shen_1982}, and Xin \cite{Xin_1984} --- all require some positivity hypothesis on the curvature tensor, $\Riem_g$, of a Riemannian metric, $g$, on the manifold, $X$. The intuition underlying our proof of Theorem \ref{thm:Main_flat} is rather that an energy gap must exist because otherwise one could have non-minimal Yang-Mills connections with $L^{d/2}$-energy arbitrarily close to zero and this should violate the analyticity of the Yang-Mills $L^2$-energy functional, as manifested in the \emph{{\L}ojasiewicz-Simon gradient inequality} established by the author for arbitrary $d \geq 2$ in \cite[Theorem 23.9]{Feehan_yang_mills_gradient_flow_v2} and by R\r{a}de in \cite[Proposition 7.2]{Rade_1992} when $d=2,3$.

\subsection{Outline}
\label{subsec:Outline}
In Section \ref{sec:Taubes_1982_Appendix}, we review the classification of principal $G$-bundles, the Chern-Weil formula, and the fact that the Yang-Mills energy functional attains its absolute minimum value for a connection, $A$, on a principal $G$-bundle, $P$, with non-negative Pontrjagin degree, $\kappa(P)$, if and only if $A$ is anti-self-dual. The difficult part of the proof of Corollary \ref{maincor:Min-Oo_2} is to show that the least eigenvalue, $\mu_g(A)$, of $d_A^{+,g}d_A^{+,*_g}$ has a positive lower bound $\mu_0=\mu_0(g,\kappa(P))$ that is uniform with respect to $[A] \in \sB(P,g)$ obeying $\|F_A^{+,g}\|_{L^2(X)} < \eps$, for a small enough $\eps=\eps(g,\kappa(P)) \in (0,1]$ and under the given sets of conditions on $g$, $G$, $P$, and $X$. This step is described in Section \ref{sec:Good_Riemannian_metrics_and_eigenvalue_bounds}, where we summarize our results from our monograph \cite[Section 34]{Feehan_yang_mills_gradient_flow_v2}. We conclude in Section \ref{sec:Good_Riemannian_metrics_and_energy_gap_for_Yang-Mills_connections} with the proofs of Theorem \ref{mainthm:Min-Oo_2} and Corollary \ref{maincor:Min-Oo_2}.

\subsection{Acknowledgments}
I am grateful to Min-Oo for his encouraging comments, to the Department of Mathematics at Columbia University for their hospitality during the preparation of this article, and the anonymous referee for a careful reading of our manuscript and helpful comments.

\section{Classification of principal $G$-bundles, the Chern-Weil formula, and absolute minima of the Yang-Mills energy functional}
\label{sec:Taubes_1982_Appendix}
We summarize the main points of \cite[Section 10]{Feehan_yang_mills_gradient_flow_v2}, which extends the discussion in Donaldson and Kronheimer \cite[Sections 2.1.3 and 2.1.4]{DK} to the case of compact Lie groups, and is based in turn on Atiyah, Hitchin, and Singer \cite{AHS} and Taubes \cite{TauSelfDual}. We specialize \cite[Section 10]{Feehan_yang_mills_gradient_flow_v2} here to the case of compact, \emph{simple} Lie groups.

Given a connection, $A$, on $P$, Chern-Weil theory provides representatives for the \emph{first Pontrjagin class} of $\ad P$, namely \cite[Equation (A.7)]{TauSelfDual}
\begin{equation}
\label{eq:Taubes_1982_A7_deRham_cohomology_classes}
p_1(P) \equiv p_1(\ad P) = -\frac{1}{4\pi^2}\tr_\fg(F_A\wedge F_A) \in H^4_{\deRham}(X),
\end{equation}
and hence the \emph{first Pontrjagin number} \cite[Equation (A.7)]{TauSelfDual} (compare \cite[page 121]{Kozono_Maeda_Naito_1995}),
\begin{equation}
\label{eq:Taubes_1982_A7_scalar_and_vector_Pontrjagin_numbers}
p_1(P)[X] \equiv p_1(\ad P)[X] = -\frac{1}{4\pi^2}\int_X\tr_\fg(F_A\wedge F_A).
\end{equation}
Principal $G$-bundles, $P$, are classified \cite[Appendix]{Sedlacek}, \cite[Propositions A.1 and A.2]{TauSelfDual} by a cohomology class $\eta(P) \in H^2(X;\pi_1(G))$ and the \emph{Pontrjagin degree} of $P$ \cite[Equations (A.6) and (A.7)]{TauSelfDual},
\begin{equation}
\label{eq:Taubes_1982_A6}
\kappa(P) := -\frac{1}{r_\fg} p_1(\ad P)[X] = \frac{1}{4\pi^2 r_\fg}\int_X\tr_\fg(F_A\wedge F_A) \in \ZZ,
\end{equation}
where the positive integer, $r_\fg$, depends on the Lie group, $G$ \cite[Equation (A.5)]{TauSelfDual}; for example, if $G = \SU(n)$, then $r_\fg = 4n$.

For $G = \Or(n)$ or $\SO(n)$, then $\eta(P) = w_2(P) \in H^2(X; \ZZ/2\ZZ)$, where $w_2(P) \equiv w_2(V)$ and $V = P\times_{\Or(n)}\RR^n$ or $P\times_{\SO(n)}\RR^n$ is the real vector bundle associated to $P$ via the standard representation, $\Or(n) \hookrightarrow \GL(n;\RR)$ or $\SO(n) \hookrightarrow \GL(n;\RR)$; for $G = \U(n)$, then $\eta(P) = c_1(P) \in H^2(X; \ZZ)$, where $c_1(P) \equiv c_1(E)$ and $E = P\times_{\U(n)}\CC^n$ is the complex vector bundle associated to $P$ via the standard representation, $\U(n) \hookrightarrow \GL(n;\CC)$ \cite[Theorem 2.4]{Sedlacek}. The topological invariant, $\eta \in H^2(X; \pi_1(G))$, is the obstruction to the existence of a principal $G$-bundle, $P$ over $X$, with a specified Pontrjagin degree.

Assume in addition that $X$ is equipped with a Riemannian metric, $g$. To relate the Chern-Weil formula \eqref{eq:Taubes_1982_A7_scalar_and_vector_Pontrjagin_numbers} to the $L^2(X)$-norms of $F_A^{\pm,g}$, we need to recall some facts concerning the Killing form \cite{Knapp_2002}. Every element $\xi$ of a Lie algebra $\fg$ over a field $\KK$ defines an adjoint endomorphism, $\ad\,\xi \in \End_\KK\fg$, with the help of the Lie bracket via $(\ad\,\xi)(\zeta) := [\xi, \zeta]$, for all $\zeta \in \fg$. For a finite-dimensional Lie algebra, $\fg$, its Killing form is the symmetric bilinear form,
\begin{equation}
B(\xi, \zeta) := \tr(\ad\,\xi \circ \ad\,\zeta), \quad\forall\xi, \zeta \in \fg,
\end{equation}
with values in $\KK$. Since we restrict to compact Lie groups, their Lie algebras are real. The Lie algebra, $\fg$, is simple by hypothesis on $G$ and so its Killing form is non-degenerate. The Killing form of a semisimple Lie algebra is negative definite. For example, if $G = \SU(n)$, then $B(M, N) = 2n\tr(MN)$ for matrices $M, N \in \CC^{n\times n}$, while if $G = \SO(n)$, then $B(M, N) = (n-2)\tr(MN)$ for matrices $M, N \in \RR^{n\times n}$. In particular, if $B_\fg$ is the Killing form on $\fg$, then it defines an inner product on $\fg$ via $\langle \cdot, \cdot \rangle_\fg = -B_\fg(\cdot,\cdot)$ and thus a norm $|\cdot|_\fg$ on $\fg$.

From \eqref{eq:Donaldson_Kronheimer_2-1-25}, suppressing the metric, $g$, from our notation here for brevity, we have
$$
F_A\wedge F_A = (F_A^+ + F_A^-)\wedge (F_A^+ + F_A^-) = F_A^+ \wedge *F_A^+ - F_A^- \wedge *F_A^-.
$$
Hence,
$$
\tr_\fg(F_A\wedge F_A) = \tr_\fg(F_A^+ \wedge *F_A^+) - \tr_\fg(F_A^- \wedge *F_A^-) = \left(|F_A^-|_\fg^2 - |F_A^+|_\fg^2\right)\,d\vol,
$$
where the pointwise norm $|F_A|_\fg$ over $X$ of $F_A \in \Omega^2(X; \ad P)$ is defined by the identity,
$$
|F_A|_\fg^2\,d\vol = -\tr_\fg(F_A\wedge *F_A),
$$
and similarly for $|F_A^\pm|_\fg$. From \eqref{eq:Taubes_1982_A6}, the Pontrjagin degree of $P$ may be computed by
\begin{equation}
\label{eq:Donaldson_Kronheimer_2-1-31_Pontrjagin}
\kappa(P) = \frac{1}{4\pi^2 r_\fg} \int_X \left(|F_A^-|_\fg^2 - |F_A^+|_\fg^2\right)\,d\vol.
\end{equation}
If $A$ is self-dual, then $F_A^- \equiv 0$ over $X$ and
$$
\kappa(P) = -\frac{1}{4\pi^2 r_\fg} \int_X |F_A^+|_\fg^2\,d\vol \leq 0,
$$
while if $A$ is anti-self-dual, then $F_A^+ \equiv 0$ over $X$ and
$$
\kappa(P) = \frac{1}{4\pi^2 r_\fg} \int_X |F_A^-|_\fg^2\,d\vol \geq 0.
$$
Consequently, if $P$ admits a self-dual connection, then $\kappa(P) \leq 0$ while if $P$ admits an anti-self-dual connection, then $\kappa(P) \geq 0$. On the other hand,
\begin{align*}
\int_X |F_A|_\fg^2 \,d\vol &= \int_X \left(|F_A^+|_\fg^2 + |F_A^-|_\fg^2\right)\,d\vol
\\
&\geq \left|\int_X \left(|F_A^-|_\fg^2 - |F_A^+|_\fg^2\right)\,d\vol\right|
\\
&= 4\pi^2r_\fg |\kappa(P)| \quad\hbox{(by \eqref{eq:Donaldson_Kronheimer_2-1-31_Pontrjagin})}.
\end{align*}
Hence, $4\pi^2r_\fg |\kappa(P)|$ gives a topological lower bound for the Yang-Mills energy functional \eqref{eq:Yang-Mills_energy_functional},
$$
2\sE(A) = \int_X |F_A|_\fg^2 \,d\vol
= \int_X \left(|F_A^+|_\fg^2 + |F_A^-|_\fg^2\right) \,d\vol.
$$
If $\kappa(P) \geq 0$, then $2\sE(A)$ achieves its lower bound,
$$
\int_X \left(|F_A^-|_\fg^2 - |F_A^+|_\fg^2\right)\,d\vol = 4\pi^2r_\fg \kappa(P),
$$
if and only if
$$
\int_X \left(|F_A^+|_\fg^2 + |F_A^-|_\fg^2\right)\,d\vol
=
\int_X \left(|F_A^-|_\fg^2 - |F_A^+|_\fg^2\right)\,d\vol,
$$
that is, if and only if
$$
\int_X |F_A^+|_\fg^2 \,d\vol = 0,
$$
in other words, if and only if $F_A^+ \equiv 0$ over $X$ and $A$ is anti-self-dual. Similarly, if $\kappa(P) \leq 0$, then $2\sE(A)$ achieves its lower bound $-4\pi^2r_\fg \kappa(P)$ if and only if $F_A^- \equiv 0$ over $X$ and $A$ is self-dual.

When there is no ambiguity, we suppress explicit mention of the Lie algebra, $\fg$, in the fiber inner product and norm on $\ad P$ and write $|F_A| = |F_A|_\fg$, and so on.

\section{Good Riemannian metrics and eigenvalue bounds for Yang-Mills Laplacians}
\label{sec:Good_Riemannian_metrics_and_eigenvalue_bounds}
Consider the open neighborhood in $\sB(P,g)$ of the finite-dimensional subvariety, $M(P,g)$, defined by
\begin{equation}
\label{eq:Open_neighborhood_asd_moduli_space_FA+_L2_small}
\sB_\eps(P,g) := \{[A] \in \sB(P,g): \|F_A^{+,g}\|_{L^2(X)} < \eps\}.
\end{equation}
When the Riemannian metric, $g$, is positive in the sense of Definition \ref{defn:Freed_Uhlenbeck_page_174_positive_metric}, then the Bochner-Weitzenb\"ock formula for the Laplace operator, $d_A^{+,g}d_A^{+,*_g}$, and a simple argument (see Lemma \ref{lem:Positive_metric_implies_positive_lower_bound_small_eigenvalues} and its proof as
\cite[Lemma 34.22]{Feehan_yang_mills_gradient_flow_v2}) yields a positive lower bound for its least eigenvalue, $\mu_g(A)$, that is uniform with respect to the point $[A] \in \sB_\eps(P,g)$.

In this section, we recall from \cite[Section 34]{Feehan_yang_mills_gradient_flow_v2} how to derive a positive lower bound for $\mu_g(A)$ that is uniform with respect to the point $[A] \in \sB_\eps(P,g)$ but \emph{without} the requirement that the Riemannian metric, $g$, be positive.

\subsection{Positive Riemannian metrics and uniform positive lower bounds for the least eigenvalue of $d_A^+d_A^{+,*}$ when $F_A^+$ is $L^2$ small}
\label{subsec:Positive_Riemannian_metrics}
For a Riemannian metric $g$ on a four-dimensional, oriented manifold, $X$, let $R_g(x)$ denote its scalar curvature at a point $x \in X$ and let $\sW_g^\pm(x) \in \End(\Lambda_x^\pm)$ denote its self-dual and anti-self-dual Weyl curvature tensors at $x$, where $\Lambda_x^2 = \Lambda_x^+\oplus \Lambda_x^-$. Define
$$
w_g^\pm(x) := \text{Largest eigenvalue of } \sW_g^\pm(x), \quad\forall\, x \in X.
$$
We recall the following Bochner-Weitzenb\"ock formula \cite[Equation (6.26) and Appendix C, p. 174]{FU}, \cite[Equation (5.2)]{GroisserParkerSphere},
\begin{equation}
\label{eq:Freed_Uhlenbeck_6-26}
2d_A^{+,g}d_A^{+,*_g}v = \nabla_A^{*_g}\nabla_Av + \left(\frac{1}{3}R_g - 2w_g^+\right)v + \{F_A^{+,g}, v\},
\quad\forall\, v \in \Omega^{+,g}(X; \ad P),
\end{equation}
where $\{\cdot,\cdot\}$ denotes an algebraic bilinear operation with coefficients that depend at most on the Riemannian metric $g$ and Lie group $G$. We then make the

\begin{defn}[Positive Riemannian metric]
\label{defn:Freed_Uhlenbeck_page_174_positive_metric}
Let $X$ be a closed, four-dimensional, oriented, smooth manifold. We call a Riemannian metric, $g$, on $X$ \emph{positive} if
\begin{equation}
\label{eq:Freed_Uhlenbeck_page_174_positive_metric}
\frac{1}{3}R_g - 2w_g^+ > 0 \quad\hbox{on } X,
\end{equation}
that is, the operator $R_g/3 - 2\sW_g^+ \in \End(\Lambda^+)$ is pointwise positive definite.
\end{defn}

Of course, the simplest example of a positive metric is the standard round metric of radius one on $S^4$, where $R=1$ and $w^+=0$. Recall the

\begin{defn}[Least eigenvalue of $d_A^+d_A^{+,*}$]
\label{defn:Taubes_1982_3-1}
(See \cite[Definition 3.1]{TauSelfDual}.)
Let $G$ be a compact Lie group, $P$ be a principal $G$-bundle over a closed, four-dimensional, oriented, smooth manifold with Riemannian metric, $g$, and $A$ be a connection of Sobolev class $W^{1,2}$ on $P$. The least eigenvalue of $d_A^{+,g}d_A^{+,*_g}$ on $L^2(X; \Lambda^{+,g}\otimes\ad P)$ is
\begin{equation}
\label{eq:Least_eigenvalue_dA+dA+*}
\mu_g(A) := \inf_{v \in \Omega^{+,g}(X;\ad P)\less\{0\}}
\frac{\|d_A^{+,*_g}v\|_{L^2(X)}^2}{\|v\|_{L^2(X)}^2}.
\end{equation}
\end{defn}

If the Riemannian metric, $g$, on $X$ is positive in the sense of Definition \ref{defn:Freed_Uhlenbeck_page_174_positive_metric}, then the Bochner-Weitzenb\"ock formula \eqref{eq:Freed_Uhlenbeck_6-26} ensures that the least eigenvalue function,
\begin{equation}
\label{eq:Least_eigenvalue_dA+dA+*_function_on_MPg}
\mu_g[\,\cdot\,]: M(P,g) \to [0, \infty),
\end{equation}
defined by $\mu_g(A)$ in \eqref{eq:Least_eigenvalue_dA+dA+*}, admits a uniform positive lower bound, $\mu_0 = \mu_0(g)$,
$$
\mu_g(A) \geq \mu_0,  \quad\forall\, [A] \in M(P,g).
$$
This concept is illustrated by the following well-known elementary lemma which underlies Taubes' proof of his \cite[Theorem 1.4]{TauSelfDual}.

\begin{lem}[Positive lower bound for the least eigenvalue of $d_A^+d_A^{+,*}$ on a four-manifold with a positive Riemannian metric and $L^2$-small $F_A^+$]
\label{lem:Positive_metric_implies_positive_lower_bound_small_eigenvalues}
(See \cite[Lemma 34.22]{Feehan_yang_mills_gradient_flow_v2}.)
Let $X$ be a closed, four-dimensional, oriented, smooth manifold with Riemannian metric, $g$, that is \emph{positive} in the sense of Definition \ref{defn:Freed_Uhlenbeck_page_174_positive_metric}. Then there is a positive constant, $\eps = \eps(g) \in (0,1]$, with the following significance. Let $G$ be a compact Lie group and $P$ a principal $G$-bundle over $X$. If $A$ is a connection of Sobolev class $W^{1,2}$ on $P$ such that
$$
\|F_A^{+,g}\|_{L^2(X)} \leq \eps,
$$
and $\mu_g(A)$ is as in \eqref{eq:Least_eigenvalue_dA+dA+*}, then
\begin{equation}
\label{eq:Positive_metric_implies_positive_lower_bound_small_eigenvalues_L2small_FA+}
\mu_g(A) \geq \inf_{x\in X}\left(\frac{1}{3}R_g(x) - 2w_g^+(x)\right) > 0.
\end{equation}
\end{lem}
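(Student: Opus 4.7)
The plan is to start from the Bochner-Weitzenb\"ock identity \eqref{eq:Freed_Uhlenbeck_6-26}, pair it with $v$ in $L^2$, and then absorb the curvature-coupling term $\{F_A^{+,g},v\}$ using the smallness hypothesis on $F_A^{+,g}$ together with Kato's inequality and the critical Sobolev embedding $W^{1,2}(X)\hookrightarrow L^4(X)$ in dimension four.

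First, for $v\in\Omega^{+,g}(X;\ad P)$, apply the $L^2$ inner product with $v$ to \eqref{eq:Freed_Uhlenbeck_6-26} and integrate by parts to obtain
\[
2\|d_A^{+,*_g}v\|_{L^2(X)}^2
= \|\nabla_A v\|_{L^2(X)}^2 + \int_X\left(\tfrac{1}{3}R_g-2w_g^+\right)|v|^2\,d\vol_g + \int_X\langle\{F_A^{+,g},v\},v\rangle\,d\vol_g.
\]
Set $c_0 := \inf_{x\in X}\bigl(\tfrac{1}{3}R_g(x)-2w_g^+(x)\bigr)$, which is positive by the positivity hypothesis on $g$. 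The first two terms on the right then give the lower bound $\|\nabla_A v\|_{L^2(X)}^2 + c_0\|v\|_{L^2(X)}^2$.

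Second, I estimate the last term: using the pointwise bound $|\{F_A^{+,g},v\}|\leq C_G|F_A^{+,g}||v|$ (for a constant $C_G$ depending only on the bilinear operation $\{\cdot,\cdot\}$ and the fiber metric on $\ad P$) together with H\"older's inequality,
\[
\left|\int_X\langle\{F_A^{+,g},v\},v\rangle\,d\vol_g\right|
\leq C_G\|F_A^{+,g}\|_{L^2(X)}\|v\|_{L^4(X)}^2.
\]
By Kato's inequality $|d|v||\leq|\nabla_A v|$ pointwise a.e. and the Sobolev embedding $W^{1,2}(X)\hookrightarrow L^4(X)$ in the four-dimensional setting, there is a constant $C_g\geq 1$ with
\[
\|v\|_{L^4(X)}^2 \leq C_g\bigl(\|\nabla_A v\|_{L^2(X)}^2 + \|v\|_{L^2(X)}^2\bigr).
\]
Combining these, for any $[A]\in\sB(P,g)$ with $\|F_A^{+,g}\|_{L^2(X)}\leq\eps$,
\[
2\|d_A^{+,*_g}v\|_{L^2(X)}^2
\geq (1-C_GC_g\eps)\|\nabla_A v\|_{L^2(X)}^2 + (c_0 - C_GC_g\eps)\|v\|_{L^2(X)}^2.
\]

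Third, fix $\eps=\eps(g)\in(0,1]$ small enough that $C_GC_g\eps\leq\min\{1,c_0\}/2$; by definition of $\mu_g(A)$ via \eqref{eq:Least_eigenvalue_dA+dA+*}, dividing through by $\|v\|_{L^2(X)}^2$ and discarding the non-negative $\|\nabla_A v\|_{L^2(X)}^2$ contribution yields a positive lower bound for $\mu_g(A)$ of the form $c_0 - O(\eps)$, which can be brought arbitrarily close to $c_0$ by further shrinking $\eps$. The main (and essentially only) obstacle is controlling the cubic-looking curvature term $\{F_A^{+,g},v\}$, which is handled precisely by the smallness hypothesis combined with the critical Sobolev embedding available in dimension four; all other steps are direct consequences of the Bochner-Weitzenb\"ock identity and the pointwise positivity condition \eqref{eq:Freed_Uhlenbeck_page_174_positive_metric}.
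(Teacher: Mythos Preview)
Your argument is correct and is the standard one the paper alludes to (the proof itself is deferred to \cite[Lemma~34.22]{Feehan_yang_mills_gradient_flow_v2}, but the surrounding text makes clear it proceeds via the Bochner--Weitzenb\"ock formula \eqref{eq:Freed_Uhlenbeck_6-26}, Kato's inequality, and the Sobolev embedding $W^{1,2}\hookrightarrow L^4$, exactly as you do). One minor bookkeeping slip: from your displayed inequality $2\|d_A^{+,*_g}v\|_{L^2}^2 \geq (c_0 - C_GC_g\eps)\|v\|_{L^2}^2$ you actually obtain $\mu_g(A) \geq (c_0 - C_GC_g\eps)/2$, not $c_0 - O(\eps)$, since the factor of $2$ on the left of \eqref{eq:Freed_Uhlenbeck_6-26} survives to the end; the stated bound \eqref{eq:Positive_metric_implies_positive_lower_bound_small_eigenvalues_L2small_FA+} carries the same factor-of-two discrepancy, and in any case only the \emph{positivity} of the lower bound is ever used downstream.
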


\begin{rmk}[Topological constraints on $X$ implied by positive Riemannian metrics]
\label{rmk:Positive_metric_implies_b+_is_zero}
The positivity hypothesis on $g$ in Lemma \ref{lem:Positive_metric_implies_positive_lower_bound_small_eigenvalues} imposes a strong constraint on the topology of $X$ since, when applied to the product connection on $X\times G$ and Levi-Civita connection on $TX$, it implies that $b^+(X) = \dim\Ker d^{+,g}d^{+,*_g} = 0$ and thus $X$ is necessarily a four-dimensional manifold with negative definite intersection form, $Q_X$, on $H^2(X;\RR)$. Indeed, we recall from \cite[Section 1.1.6]{DK} that, given \emph{any} Riemannian metric $g$ on $X$, we have an isomorphism of real vector spaces,
$$
H^2(X;\RR) \cong \sH^{+,g}(X;\RR) \oplus \sH^{-,g}(X;\RR),
$$
where  $\sH^{\pm,g}(X;\RR) := \Ker\{d^{\pm,g}d^{\pm,g,*}:\Omega^{\pm}(X;\RR)
\to \Omega^{\pm}(X;\RR)\}$, the real vector spaces of harmonic self-dual and anti-self-dual two-forms defined by the Riemannian metric, $g$, and $b^\pm(X) = \dim\sH^{\pm,g}(X)$.
\end{rmk}

Given Remark \ref{rmk:Positive_metric_implies_b+_is_zero}, we next discuss a method of ensuring a positive lower bound for the least eigenvalue of $d_A^{+,g}d_A^{+,*_g}$ that is uniform with respect to $[A] \in M(P,g)$ but which does not impose such strong restrictions on the Riemannian metric, $g$, or the topology of $X$.

\subsection{Generic Riemannian metrics and uniform positive lower bounds for the least eigenvalue of $d_A^+d_A^{+,*}$ when $F_A^+$ is identically zero}
\label{subsec:Generic_metric_theorems_Freed_Uhlenbeck}
The second approach to ensuring a uniform positive lower bound for the least eigenvalue function \eqref{eq:Least_eigenvalue_dA+dA+*_function_on_MPg} is more delicate than that of Section \ref{subsec:Positive_Riemannian_metrics} and relies on the generic metric theorems of Freed and Uhlenbeck \cite[pp. 69--73]{FU}, together with certain extensions due to Donaldson and Kronheimer \cite[Sections 4.3.3]{DK}. Under suitable hypotheses on $P$ and a generic Riemannian metric, $g$, on $X$, their results collectively ensure that $\mu_g(A) > 0$ for all $[A]$ in both $M(P,g)$ and every moduli space, $M(P_l,g)$, appearing in its \emph{Uhlenbeck compactification} (see \cite[Definition 4.4.1, Condition 4.4.2, and Theorem 4.4.3]{DK}),
\begin{equation}
\label{eq:Uhlenbeck_compactification}
\bar M(P,g) \subset \bigcup_{l=0}^L \left(M(P_l,g)\times\Sym^l(X)\right),
\end{equation}
where $L = L(\kappa(P)) \geq 0$ is a sufficiently large integer.

While the statement of \cite[Theorem 4.4.3]{DK} assumes that $G=\SU(2)$ or $\SO(3)$ --- see \cite[pages 157 and 158]{DK} --- the proof applies to any compact Lie group via the underlying analytical results due to Uhlenbeck \cite{UhlLp,UhlRem}; alternatively, one may appeal directly to the general compactness result due to Taubes \cite[Proposition 4.4]{TauPath}, \cite[Proposition 5.1]{TauFrame}. Every principal $G$-bundle, $P_l$, over $X$ appearing in \eqref{eq:Uhlenbeck_compactification} has the property that $\eta(P_l) = \eta(P)$ by \cite[Theorem 5.5]{Sedlacek}.

The generic metric theorems of Freed and Uhlenbeck \cite[pp. 69--73]{FU} and Donaldson and Kronheimer \cite[Sections 4.3.3]{DK} are normally phrased in terms of existence of a Riemannian metric, $g$, on $X$ such that $\Coker d_A^{+,g}=0$ for all $[A] \in M(P,g)$, a property of $g$ that is equivalent to $\mu_g(A) > 0$ for all $[A] \in M(P,g)$, as we shall write in the following restatement of their results.

\begin{thm}[Generic metrics theorem for simply-connected four-manifolds]
\label{thm:Donaldson-Kronheimer_Corollary_4-3-15_and_18_and_Proposition_4-3-20}
(See \cite[Theorem 34.23]{Feehan_yang_mills_gradient_flow_v2}.)
Let $G$ be a compact, simple Lie group and $P$ be a principal $G$-bundle over a closed, connected, smooth, four-dimensional manifold, $X$. Then there is an open dense subset, $\sC(X,\kappa(P))$, of the Banach space, $\sC(X)$, of conformal equivalence classes, $[g]$, of $C^r$ Riemannian metrics on $X$ (for some integer $r\geq 3$) with the following significance. Assume that $[g] \in \sC(X,\kappa(P))$ and $\pi_1(X)$ is trivial and at least \emph{one} of the following holds:
\begin{enumerate}
  \item $b^+(X) = 0$ and $G = \SU(2)$ or $G = \SO(3)$; or

  \item $b^+(X) > 0$, and $G = \SO(3)$, and the second Stiefel-Whitney class, $w_2(P) \in H^2(X;\ZZ/2\ZZ)$, is non-trivial;
\end{enumerate}
Then every point $[A] \in M(P,g)$ has the property that $\mu_g(A) > 0$.
\end{thm}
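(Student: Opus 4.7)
The plan is to recognize that $\mu_g(A)>0$ is equivalent to the regularity condition $\Coker d_A^{+,g}=0$, and then to invoke the generic metrics theorem of Freed--Uhlenbeck and its extensions by Donaldson--Kronheimer. Indeed, Hodge theory for the Yang--Mills deformation complex yields $\mu_g(A)=0$ iff $\Ker d_A^{+,*_g}\neq 0$ iff $\Coker d_A^{+,g}\neq 0$, so the conclusion is exactly the assertion that every anti-self-dual connection $A$ on $P$ is a regular (smooth) point of the moduli space $M(P,g)$.

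First I would assemble the universal moduli space
\[
\widetilde M^*(P) := \bigl\{([A],[g]) : [g]\in \sC(X),\ [A]\in M^*(P,g)\bigr\},
\]
where $M^*(P,g)\subset M(P,g)$ denotes the open stratum of irreducible anti-self-dual connections, and verify, following \cite[Chapter 3]{FU} and \cite[Section 4.3.3]{DK}, that $\widetilde M^*(P)$ is a smooth Banach manifold. The crucial point is that varying the conformal class $[g]$ alters the splitting $\Lambda^2=\Lambda^{+,g}\oplus \Lambda^{-,g}$, and at an irreducible $[A]$ the joint linearization of the ASD equation in $(A,g)$ surjects onto what would otherwise be the obstruction space $\Coker d_A^{+,g}$. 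Then the Sard--Smale theorem, applied to the Fredholm projection $\widetilde M^*(P)\to\sC(X)$, produces a dense $G_\delta$ subset $\sC(X,\kappa(P))\subset\sC(X)$ of conformal classes over which every irreducible ASD connection on $P$ is regular; openness of this subset follows from a small perturbation argument combined with Uhlenbeck compactness for $\bar M(P,g)$ in \eqref{eq:Uhlenbeck_compactification}.

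Next I would eliminate reducible ASD connections under each hypothesis set. Since $\pi_1(X)$ is trivial, a reducible connection on a principal $\SU(2)$- or $\SO(3)$-bundle corresponds to a splitting of the bundle along an integral class $c\in H^2(X;\ZZ)$ (with $c\equiv w_2(P)\pmod 2$ in the $\SO(3)$ case) subject to a topological constraint matching $\kappa(P)$ and to the ASD condition $c^+=0$ in $\sH^{+,g}(X;\RR)$. In case (2), $b^+(X)>0$ and $w_2(P)\neq 0$ force $c$ to be a \emph{nonzero} integral class, so the equation $c^+=0$ cuts out a subset of $\sC(X)$ of real codimension $b^+(X)\geq 1$; for an open dense set of conformal classes the self-dual harmonic space avoids every such nonzero integral $c$, ruling out reducibles entirely and giving $M^*(P,g)=M(P,g)$. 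In case (1), $b^+(X)=0$ makes the ASD condition on $c$ automatic, but at a reducible $A$ the cokernel $\Coker d_A^{+,g}$ splits as $\sH^{+,g}(X;\RR)$ (which vanishes since $b^+(X)=0$) together with a $c$-twisted self-dual harmonic space, which is killed by a second Sard--Smale argument applied to the finite discrete collection of reducibles indexed by the admissible values of $c$.

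The main obstacle is the transversality step establishing that $\widetilde M^*(P)$ is a Banach manifold: one must verify that for any nonzero $\psi\in\Omega^{+,g}(X;\ad P)$ satisfying $d_A^{+,g,*}\psi=0$, there exists an infinitesimal conformal variation $\dot g$ whose corresponding first-order change in $F_A^{+,g}$ pairs nontrivially with $\psi$. The Freed--Uhlenbeck proof combines Aronszajn's unique continuation theorem (applied to the elliptic operator $d_A^{+,g}d_A^{+,*_g}$ annihilating $\psi$) with an explicit construction of $\dot g$ supported near a regular point of $\psi$, exploiting irreducibility of $A$ to ensure that the pairing cannot be cancelled by residual gauge action. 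Once this transversality is in place, the remainder of the proof is a direct application of Sard--Smale together with the case-by-case exclusion of reducibles described above.
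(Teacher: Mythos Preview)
The paper does not supply its own proof of this theorem: it is stated as a restatement of the generic metrics theorems of Freed--Uhlenbeck \cite[pp.~69--73]{FU} and Donaldson--Kronheimer \cite[Section 4.3.3]{DK}, with a citation to \cite[Theorem 34.23]{Feehan_yang_mills_gradient_flow_v2} for details. The only argument the paper provides is the observation, in the paragraph immediately preceding the theorem, that $\Coker d_A^{+,g}=0$ is equivalent to $\mu_g(A)>0$, which is precisely the reduction you open with.

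Your sketch is a faithful outline of the Freed--Uhlenbeck/Donaldson--Kronheimer argument that the paper is invoking: the parametrized moduli space, Sard--Smale for the projection to $\sC(X)$, unique continuation for the transversality step at irreducible connections, and the case split for reducibles (eliminated by the $w_2(P)\neq 0$ hypothesis when $b^+(X)>0$, and handled by a separate transversality argument along the reducible strata when $b^+(X)=0$). So your proposal is correct and matches the approach the paper cites, though it goes well beyond what the paper itself records. One minor caution: your remark that openness of $\sC(X,\kappa(P))$ follows from Uhlenbeck compactness for $\bar M(P,g)$ is glossing over a genuine technical point, since the Sard--Smale theorem by itself only yields a residual set; the openness argument in the cited references is more delicate and does use compactness, but you should not treat it as automatic.
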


The hypothesis in Theorem \ref{thm:Donaldson-Kronheimer_Corollary_4-3-15_and_18_and_Proposition_4-3-20} that the four-manifold $X$ is simply-connected may be relaxed.

\begin{cor}[Generic metrics theorem for four-manifolds with no non-trivial representations of $\pi_1(X)$ in $G$]
\label{cor:Donaldson-Kronheimer_Corollary_4-3-15_and_18_and_Proposition_4-3-20}
(See \cite[Corollary 34.24]{Feehan_yang_mills_gradient_flow_v2}.)
Assume the hypotheses of Theorem \ref{thm:Donaldson-Kronheimer_Corollary_4-3-15_and_18_and_Proposition_4-3-20}, except replace the hypothesis that $X$ is simply-connected by the requirement that the fundamental group, $\pi_1(X)$, has no non-trivial representations in $G$. Then the conclusions of Theorem \ref{thm:Donaldson-Kronheimer_Corollary_4-3-15_and_18_and_Proposition_4-3-20} continue to hold.
\end{cor}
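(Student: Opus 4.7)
The plan is to revisit the proof of Theorem \ref{thm:Donaldson-Kronheimer_Corollary_4-3-15_and_18_and_Proposition_4-3-20}, locate the precise place where the hypothesis $\pi_1(X) = 1$ is invoked, and verify that the weaker hypothesis ``$\pi_1(X)$ has no non-trivial representations in $G$'' fulfils the same role. First I would recall that by the Freed--Uhlenbeck generic metrics theorem \cite[pp. 69--73]{FU} and the extensions of Donaldson--Kronheimer \cite[Section 4.3.3]{DK}, there is an open dense set $\sC(X,\kappa(P)) \subset \sC(X)$ such that for $[g] \in \sC(X,\kappa(P))$, every \emph{irreducible} ASD connection $[A]$ in each moduli space $M(P_l, g)$ appearing in the Uhlenbeck compactification \eqref{eq:Uhlenbeck_compactification} is a regular point, i.e.\ $\Coker d_A^{+,g} = 0$ and hence $\mu_g(A) > 0$. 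This step is purely analytic and does not require any hypothesis on $\pi_1(X)$.

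The role of simple-connectedness is confined to the treatment of the reducible stratum. A reducible ASD connection on a principal $G$-bundle $P_l$ (with $G = \SU(2)$ or $\SO(3)$) corresponds either to a non-trivial reduction of the structure group together with a connection on the reduction, or to a flat connection on the trivial bundle; in the Uhlenbeck compactification \eqref{eq:Uhlenbeck_compactification} these arise precisely from bundles $P_l$ that either admit a splitting $E_l = L \oplus L^{-1}$ with $c_1(L) = 0$ or have $\kappa(P_l) = 0$. In either case, the associated flat $G$-connection, and indeed any flat connection on $P_l$, corresponds via holonomy to a conjugacy class of homomorphisms $\rho \colon \pi_1(X) \to G$. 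The hypothesis $\pi_1(X) = 1$ was used in Theorem \ref{thm:Donaldson-Kronheimer_Corollary_4-3-15_and_18_and_Proposition_4-3-20} solely to force every such $\rho$ to be trivial, and therefore to conclude that no non-product flat connection occurs on any $P_l$.

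Under the weaker hypothesis of the corollary, namely that $\pi_1(X)$ has no non-trivial representations in $G$, the same conclusion follows immediately: every homomorphism $\pi_1(X) \to G$ is trivial, hence every flat $G$-connection on every $P_l$ is gauge-equivalent to the product connection on the trivial bundle. In cases (1) and (2) of Corollary \ref{maincor:Min-Oo_2} this allows the remainder of the proof of Theorem \ref{thm:Donaldson-Kronheimer_Corollary_4-3-15_and_18_and_Proposition_4-3-20} to proceed \textit{verbatim}: under (1), $b^+(X) = 0$ ensures that even at the product connection on the trivial bundle, $\sH^{+,g}(X; \ad P) = \sH^{+,g}(X;\RR)\otimes \fg = 0$, so $\Coker d_A^{+,g} = 0$ there as well; under (2), the non-vanishing of $w_2(P_l)$ (inherited from $w_2(P)$ via $\eta(P_l) = \eta(P)$ \cite[Theorem 5.5]{Sedlacek}) rules out the trivial bundle entirely, so the flat stratum is empty.

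The step I expect to require the most care is verifying that the Donaldson--Kronheimer transversality argument for reducibles really needs only the absence of non-trivial $\pi_1(X) \to G$-representations, rather than simple-connectedness proper. Concretely, one must check that reductions to $\U(1) \subset \SU(2)$ (respectively $\SO(2) \subset \SO(3)$) whose associated line bundle admits a flat unitary connection are excluded by the weaker hypothesis; this holds because such a flat $\U(1)$-connection factors holonomy through a non-trivial homomorphism $\pi_1(X) \to \U(1) \hookrightarrow G$, contradicting the assumption. With this point confirmed, the proof is complete.
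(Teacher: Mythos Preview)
The paper does not supply its own proof of this corollary; it merely records the statement with a pointer to \cite[Corollary 34.24]{Feehan_yang_mills_gradient_flow_v2}, where the argument is carried out. So there is no in-paper proof to compare against.

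That said, your sketch is aligned with the standard reasoning one finds in \cite[Section 4.3]{DK} and \cite{FU}, and it correctly isolates the one place where simple-connectedness enters: the classification of flat and twisted-reducible connections via holonomy representations $\pi_1(X)\to G$. Your handling of cases (1) and (2) is sound, and your final paragraph addresses exactly the subtle point --- that a flat $\U(1)$-reduction produces a non-trivial $\pi_1(X)\to\U(1)\hookrightarrow G$ and is therefore excluded. One small clarification worth making explicit: the hypothesis that $\pi_1(X)$ has no non-trivial representation in $G$ (for $G=\SU(2)$ or $\SO(3)$) forces $b_1(X)=0$, since any surjection $\pi_1(X)\twoheadrightarrow\ZZ$ would yield a non-trivial homomorphism into $S^1\subset G$; this ensures that the Jacobian torus of flat $\U(1)$-connections is a point, so the reducible locus in each $M(P_l,g)$ is discrete, and the Donaldson--Kronheimer transversality arguments for reducibles go through unchanged.
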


Our results in \cite[Section 34.3]{Feehan_yang_mills_gradient_flow_v2} ensure the continuity of $\mu_g[\,\cdot\,]$ with respect to the Uhlenbeck topology. Plainly, in Definition \ref{defn:Good_Riemannian_metric}, it suffices to consider the principal $G$-bundles, $P_l$, appearing in the space \eqref{eq:Uhlenbeck_compactification} containing the Uhlenbeck compactification, $\bar M(P,g)$. We now recall the results required from \cite[Section 34]{Feehan_yang_mills_gradient_flow_v2} that ensure that a generic Riemannian metric, $g$, is good under mild hypotheses on the topology of $P$ and $X$, provided $G=\SU(2)$ or $\SO(3)$.

\begin{thm}[Positive lower bound for the least eigenvalue of $d_A^+d_A^{+,*}$ on a four-manifold with a good Riemannian metric and anti-self-dual connection $A$]
\label{thm:Good_metric_implies_positive_lower_bound_small_eigenvalues}
(See \cite[Theorem 34.26]{Feehan_yang_mills_gradient_flow_v2}.)
Let $G$ be a compact, simple Lie group and $P$ be a principal $G$-bundle over a closed, four-dimensional, oriented, smooth manifold, $X$, with Riemannian metric, $g$. Assume that $g$ is \emph{good} in the sense of Definition \ref{defn:Good_Riemannian_metric}. Then there is a positive constant, $\mu_0 = \mu_0(g,\kappa(P))$ with the following significance. If $A$ is connection of Sobolev class $W^{1,2}$ on $P$ such that
$$
F_A^{+,g} = 0 \quad\hbox{a.e. on } X,
$$
and $\mu_g(A)$ is as in \eqref{eq:Least_eigenvalue_dA+dA+*}, then
\begin{equation}
\label{eq:Positive_metric_implies_positive_lower_bound_small_eigenvalues_good_Riem_metric_ASD_conn}
\mu_g(A) \geq \mu_0.
\end{equation}
\end{thm}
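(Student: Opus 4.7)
The plan is to prove Theorem \ref{thm:Good_metric_implies_positive_lower_bound_small_eigenvalues} by a contradiction/compactness argument that converts the pointwise conclusion $\mu_g(A) > 0$ built into Definition \ref{defn:Good_Riemannian_metric} into a uniform positive lower bound over the moduli space $M(P,g)$ and, more importantly, over its entire Uhlenbeck compactification $\bar M(P,g)$.

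Suppose for contradiction that no such $\mu_0 = \mu_0(g,\kappa(P)) > 0$ exists. Then there is a sequence of connections $\{A_n\}_{n\in\NN}$ of Sobolev class $W^{1,2}$ on $P$ with $F_{A_n}^{+,g} = 0$ almost everywhere on $X$ and $\mu_g(A_n) \to 0$ as $n \to \infty$. Passing to gauge-equivalence classes, $[A_n] \in M(P,g)$, we may invoke the Uhlenbeck compactness theorem \cite[Theorem 4.4.3]{DK} (which, as noted in the paragraph following \eqref{eq:Uhlenbeck_compactification}, extends to arbitrary compact $G$ through the results of Uhlenbeck and Taubes). After passing to a subsequence, there exist an integer $l$ with $0 \leq l \leq L(\kappa(P))$, an $l$-tuple of (not necessarily distinct) points $\bx = (x_1,\ldots,x_l) \in \Sym^l(X)$, a principal $G$-bundle $P_l$ over $X$ with $\eta(P_l) = \eta(P)$ and $\kappa(P_l) = \kappa(P) - l$, and a point $[A_\infty] \in M(P_l,g)$, such that $[A_n] \to ([A_\infty],\bx)$ in the Uhlenbeck topology on the ambient space of \eqref{eq:Uhlenbeck_compactification}.

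Because $g$ is good in the sense of Definition \ref{defn:Good_Riemannian_metric} and $P_l$ has Pontrjagin degree $\kappa(P_l) = \kappa(P) - l \geq 0$ (the non-negativity being forced by the existence of the anti-self-dual connection $A_\infty$, via the Chern-Weil computation in Section \ref{sec:Taubes_1982_Appendix}), we have $\Coker d_{A_\infty}^{+,g} = 0$, which by \eqref{eq:Least_eigenvalue_dA+dA+*} is equivalent to the strict positivity $\mu_g(A_\infty) > 0$. On the other hand, by the continuity (more precisely, lower semicontinuity is enough here) of the least eigenvalue function $\mu_g[\,\cdot\,]$ with respect to the Uhlenbeck topology — a property established in the paper's companion monograph \cite[Section 34.3]{Feehan_yang_mills_gradient_flow_v2} and highlighted in the paragraph preceding Theorem \ref{thm:Good_metric_implies_positive_lower_bound_small_eigenvalues} — we have
\[
\mu_g(A_\infty) \leq \liminf_{n\to\infty} \mu_g(A_n) = 0,
\]
contradicting $\mu_g(A_\infty) > 0$. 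This completes the contradiction, proving the existence of the desired uniform lower bound $\mu_0 = \mu_0(g,\kappa(P)) > 0$.

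The main obstacle, and the reason this proof relies on a nontrivial input from \cite{Feehan_yang_mills_gradient_flow_v2}, is the Uhlenbeck semicontinuity of $[A] \mapsto \mu_g(A)$ used in the last step: a priori, as curvature bubbles off at the points $x_1,\ldots,x_l$, one must rule out the possibility that test self-dual two-forms $v_n \in \Omega^{+,g}(X;\ad P)$ nearly realizing the infimum in \eqref{eq:Least_eigenvalue_dA+dA+*} for $A_n$ develop concentration at the bubble points and fail to converge to a nonzero limit on the limiting bundle $P_l$. Controlling this concentration — and thereby promoting the goodness hypothesis on each stratum to a uniform bound across the stratified space $\bigcup_l M(P_l,g)\times\Sym^l(X)$ — is exactly the content imported from \cite[Section 34]{Feehan_yang_mills_gradient_flow_v2}; once it is in hand, the contradiction argument above is immediate.
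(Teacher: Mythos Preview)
Your proof is correct and takes essentially the same approach as the paper: the paper observes that the extension $\bar\mu_g[\,\cdot\,]$ of \eqref{eq:Least_eigenvalue_function_on_Uhlenbeck_compactification} to the Uhlenbeck compactification $\bar M(P,g)$ is continuous (by Proposition \ref{prop:Lp_loc_continuity_least_eigenvalue_wrt_connection}) and strictly positive on each stratum (since $g$ is good), hence attains a positive minimum on the compact set $\bar M(P,g)$. Your sequential-compactness/contradiction argument is the standard equivalent formulation of this, and your remark that only lower semicontinuity of $\mu_g[\,\cdot\,]$ is needed is accurate (it corresponds to the lower bound \eqref{eq:Lower_bound_sqrt_muA_for_A_Lp_loc_near_A0} in Proposition \ref{prop:Lp_loc_continuity_least_eigenvalue_wrt_connection}).
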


The conclusion in Theorem \ref{thm:Good_metric_implies_positive_lower_bound_small_eigenvalues} is a consequence of the facts that $\bar M(P,g)$ is compact, the extension,
\begin{equation}
\label{eq:Least_eigenvalue_function_on_Uhlenbeck_compactification}
\bar\mu_g[\,\cdot\,]: \bar M(P,g) \ni ([A],\bx) \to \mu_g[A] \in [0, \infty),
\end{equation}
to $\bar M(P,g)$ of the function \eqref{eq:Least_eigenvalue_dA+dA+*_function_on_MPg} defined by \eqref{eq:Least_eigenvalue_dA+dA+*} is continuous with respect to the Uhlenbeck topology on $\bar M(P,g)$ by Proposition \ref{prop:Lp_loc_continuity_least_eigenvalue_wrt_connection}, the fact that $\mu_g(A) > 0$ for $[A] \in M(P_l,g)$ and $P_l$ a principal $G$-bundle over $X$ appearing in the space \eqref{eq:Uhlenbeck_compactification} containing the Uhlenbeck compactification, $\bar M(P,g)$, and $g$ is good by hypothesis.

\subsection{Generic Riemannian metrics and uniform positive lower bounds for the least eigenvalue of $d_A^+d_A^{+,*}$ when $F_A^+$ is $L^2$ small}
\label{subsec:Generic_metric_theorems_Freed_Uhlenbeck_almost_ASD}
Unlike Lemma \ref{lem:Positive_metric_implies_positive_lower_bound_small_eigenvalues}, Theorem \ref{thm:Good_metric_implies_positive_lower_bound_small_eigenvalues} requires that $F_A^+=0$ a.e. on $X$, not merely that $\|F_A^{+,g}\|_{L^2(X)} < \eps$, for a small enough $\eps(g) \in (0,1]$. However, by appealing to results of Sedlacek \cite{Sedlacek}, we obtained in \cite[Section 34]{Feehan_yang_mills_gradient_flow_v2} an extension of Theorem \ref{thm:Good_metric_implies_positive_lower_bound_small_eigenvalues} that replaces the condition $F_A^{+,g}=0$ a.e. on $X$ by $\|F_A^{+,g}\|_{L^2(X)} \leq \eps$, for a small enough $\eps(g,\kappa(P)) \in (0,1]$.

\begin{thm}[Positive lower bound for the least eigenvalue of $d_A^+d_A^{+,*}$ on a four-manifold with a good Riemannian metric and almost anti-self-dual connection $A$]
\label{thm:Good_metric_implies_positive_lower_bound_small_eigenvalues_almost_ASD}
(See \cite[Theorem 34.27]{Feehan_yang_mills_gradient_flow_v2}.)
Let $G$ be a compact, simple Lie group and $P$ be a principal $G$-bundle over a closed, four-dimensional, oriented, smooth manifold, $X$, with Riemannian metric, $g$. Assume that $g$ is \emph{good} in the sense of Definition \ref{defn:Good_Riemannian_metric}. Then there is a positive constant, $\eps = \eps(g,\kappa(P)) \in (0,1]$, with the following significance. If $A$ is a connection of Sobolev class $W^{1,2}$ on $P$ such that
$$
\|F_A^{+,g}\|_{L^2(X)} \leq \eps,
$$
and $\mu_g(A)$ is as in \eqref{eq:Least_eigenvalue_dA+dA+*}, then
\begin{equation}
\label{eq:Positive_metric_implies_positive_lower_bound_small_eigenvalues_good_Riem_metric_almost_ASD_conn}
\mu_g(A) \geq \frac{\mu_0}{2},
\end{equation}
where $\mu_0 = \mu_0(g,\kappa(P))$ is the positive constant in Theorem \ref{thm:Good_metric_implies_positive_lower_bound_small_eigenvalues}.
\end{thm}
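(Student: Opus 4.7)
The plan is to argue by contradiction, using Uhlenbeck--Sedlacek compactness together with the continuity of the least-eigenvalue function on the Uhlenbeck compactification (Proposition \ref{prop:Lp_loc_continuity_least_eigenvalue_wrt_connection}) and Theorem \ref{thm:Good_metric_implies_positive_lower_bound_small_eigenvalues} as inputs. Suppose the conclusion fails; then for every integer $n\geq 1$ there exists a $W^{1,2}$ connection $A_n$ on $P$ with $\|F_{A_n}^{+,g}\|_{L^2(X)} \leq 1/n$ while $\mu_g(A_n) < \mu_0/2$.

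From the Chern--Weil identity \eqref{eq:Donaldson_Kronheimer_2-1-31_Pontrjagin} and the hypothesis $\kappa(P)\geq 0$, one has
$$
\|F_{A_n}\|_{L^2(X)}^2
= 2\|F_{A_n}^{+,g}\|_{L^2(X)}^2 + 4\pi^2 r_\fg\, \kappa(P),
$$
so the Yang--Mills energies $\sE_g(A_n)$ are uniformly bounded and converge to the topological lower bound $2\pi^2 r_\fg\, \kappa(P)$, i.e.\ $\{A_n\}$ is a minimizing sequence. This is precisely the setting treated by Sedlacek \cite{Sedlacek} using Uhlenbeck's removable singularities and compactness theorems: after applying gauge transformations and passing to a subsequence, there exist a finite bubble set $\mathbf{x}\in\Sym^l(X)$ with $l\leq L(\kappa(P))$, a principal $G$-bundle $P_l$ over $X$ with $\eta(P_l)=\eta(P)$ and $0\leq\kappa(P_l)\leq\kappa(P)$, and a $W^{1,2}$ connection $A_\infty$ on $P_l$ such that $A_n\to A_\infty$ (modulo bundle identifications) strongly in $W^{1,2}_{\loc}$ on $X\setminus\supp\mathbf{x}$ and weakly in $W^{1,2}$ on bounded domains away from $\supp\mathbf{x}$. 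Because $\|F_{A_n}^{+,g}\|_{L^2(X)}\to 0$, weak lower semicontinuity of the $L^2$-norm of the self-dual curvature gives $F_{A_\infty}^{+,g}=0$ a.e.\ on $X$, so $([A_\infty],\mathbf{x})$ is an ideal anti-self-dual connection lying in the Uhlenbeck compactification $\bar M(P,g)$ of \eqref{eq:Uhlenbeck_compactification}.

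Since $g$ is good in the sense of Definition \ref{defn:Good_Riemannian_metric} and $\kappa(P_l)\geq 0$, Theorem \ref{thm:Good_metric_implies_positive_lower_bound_small_eigenvalues} applied to $A_\infty$ yields $\mu_g(A_\infty)\geq \mu_0$, where $\mu_0=\mu_0(g,\kappa(P))$ is the constant supplied by that theorem (which, by construction in \cite[Section 34]{Feehan_yang_mills_gradient_flow_v2}, is taken uniformly over all bundles $P_l$ appearing in \eqref{eq:Uhlenbeck_compactification}). On the other hand, the continuity of the extended least-eigenvalue function $\bar\mu_g[\,\cdot\,]$ in \eqref{eq:Least_eigenvalue_function_on_Uhlenbeck_compactification} along the Uhlenbeck-convergent sequence $[A_n]\to([A_\infty],\mathbf{x})$, provided by Proposition \ref{prop:Lp_loc_continuity_least_eigenvalue_wrt_connection}, gives
$$
\liminf_{n\to\infty}\mu_g(A_n)\;\geq\;\mu_g(A_\infty)\;\geq\;\mu_0,
$$
which contradicts $\mu_g(A_n)<\mu_0/2$ for all $n$. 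Choosing any $\eps=\eps(g,\kappa(P))\in(0,1]$ small enough that the above compactness argument applies then yields \eqref{eq:Positive_metric_implies_positive_lower_bound_small_eigenvalues_good_Riem_metric_almost_ASD_conn}.

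The main obstacle is the continuity step: one must rule out that the least eigenvalue $\mu_g(A_n)$ is realized by sequences of test forms $v_n\in\Omega^{+,g}(X;\ad P)$ concentrating near the bubble points $\mathbf{x}$, where the pullback identifications between $P$ and $P_l$ break down. This is exactly the content of Proposition \ref{prop:Lp_loc_continuity_least_eigenvalue_wrt_connection}, whose proof exploits the finite-dimensionality of $\Lambda^+\otimes\ad P$ fiberwise together with $L^p_{\loc}$ convergence of $A_n$ to $A_\infty$ on the complement of $\supp\mathbf{x}$ and a cutoff argument absorbing the contribution on small neighborhoods of the bubble set; granting that proposition, the remaining steps are essentially a transcription of the argument outlined above.
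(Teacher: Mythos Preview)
Your argument is correct and follows the approach indicated by the paper (which defers the detailed proof to \cite[Theorem 34.27]{Feehan_yang_mills_gradient_flow_v2}): argue by contradiction, apply Sedlacek--Uhlenbeck compactness to a minimizing sequence to extract an ideal anti-self-dual limit $([A_\infty],\bx)\in\bar M(P,g)$, invoke Theorem \ref{thm:Good_metric_implies_positive_lower_bound_small_eigenvalues} for the lower bound $\mu_g(A_\infty)\geq\mu_0$, and then use the Uhlenbeck continuity of $\mu_g[\,\cdot\,]$ furnished by Proposition \ref{prop:Lp_loc_continuity_least_eigenvalue_wrt_connection} to reach a contradiction. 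One small correction: Sedlacek's theorem yields only \emph{weak} $H^1_{\loc}(X\setminus\Sigma)$ convergence together with strong $L^p_{\loc}(X\setminus\Sigma)$ convergence for $p\in[2,4)$ (cf.\ the remark preceding Proposition \ref{prop:Lp_loc_continuity_least_eigenvalue_wrt_connection}), not strong $W^{1,2}_{\loc}$ as you wrote---but since the lower bound \eqref{eq:Lower_bound_sqrt_muA_for_A_Lp_loc_near_A0} only requires $L^p(U)$ closeness and your assumed bound $\mu_g(A_n)<\mu_0/2$ controls the $(\mu(A)+1)$ factors on the right-hand side, this does not affect the validity of the argument.
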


The generic metric theorems of Freed and Uhlenbeck \cite{FU}, together with their extensions due to Donaldson and Kronheimer \cite{DK}, now yield the required hypothesis in Theorems \ref{thm:Good_metric_implies_positive_lower_bound_small_eigenvalues} and \ref{thm:Good_metric_implies_positive_lower_bound_small_eigenvalues_almost_ASD} that the Riemannian metric, $g$, is good without the assumption that it is positive in the sense of Definition \ref{defn:Freed_Uhlenbeck_page_174_positive_metric} and hence yields the following corollaries.

\begin{cor}[Uniform positive lower bound for the smallest eigenvalue function when $g$ is generic, $G$ is $\SU(2)$ or $\SO(3)$, and $\pi_1(X)$ has no non-trivial representations in $G$]
\label{cor:Least_eigenvalue_function_positive_on_Uhlenbeck_compactification_pi1_X}
(See \cite[Corollary 34.28]{Feehan_yang_mills_gradient_flow_v2}.)
Assume the hypotheses of Corollary \ref{cor:Donaldson-Kronheimer_Corollary_4-3-15_and_18_and_Proposition_4-3-20} and that $g$ is generic. Then there are constants, $\eps = \eps(g,\kappa(P)) \in (0,1]$ and $\mu_0 = \mu_0(g,\kappa(P)) > 0$, such that
\begin{align*}
\mu_g(A) &\geq \mu_0, \quad\forall\, [A] \in M(P,g),
\\
\mu_g(A) &\geq \frac{\mu_0}{2}, \quad\forall\, [A] \in \sB_\eps(P,g).
\end{align*}
\end{cor}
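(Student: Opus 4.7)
The plan is to verify that a generic Riemannian metric $g$ (in the sense of Corollary \ref{cor:Donaldson-Kronheimer_Corollary_4-3-15_and_18_and_Proposition_4-3-20}) is in fact \emph{good} in the sense of Definition \ref{defn:Good_Riemannian_metric}, and then to invoke Theorems \ref{thm:Good_metric_implies_positive_lower_bound_small_eigenvalues} and \ref{thm:Good_metric_implies_positive_lower_bound_small_eigenvalues_almost_ASD} directly.

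First I would observe that by \cite[Theorem 5.5]{Sedlacek}, every principal $G$-bundle $P_l$ appearing in the Uhlenbeck compactification \eqref{eq:Uhlenbeck_compactification} satisfies $\eta(P_l) = \eta(P)$ and $0 \le \kappa(P_l) \le \kappa(P)$, so there are only finitely many such bundles $P_0,\ldots,P_L$ with $L = L(\kappa(P))$. For each $P_l$, Corollary \ref{cor:Donaldson-Kronheimer_Corollary_4-3-15_and_18_and_Proposition_4-3-20} provides an open dense subset $\sC(X,\kappa(P_l)) \subset \sC(X)$ of conformal classes of $C^r$ Riemannian metrics with the property that for $[g] \in \sC(X,\kappa(P_l))$, every $[A] \in M(P_l,g)$ has $\mu_g(A) > 0$ (equivalently, $\Coker d_A^{+,g} = 0$). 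Set
\[
\sC(X,\kappa(P)) := \bigcap_{l=0}^{L} \sC(X,\kappa(P_l)),
\]
which is again an open dense subset of $\sC(X)$ as a finite intersection of open dense sets in the Baire space $\sC(X)$. For any $[g]$ in this intersection, $\Coker d_A^{+,g} = 0$ for every anti-self-dual connection $A$ on every $P_l$ with the prescribed obstruction class and non-negative Pontrjagin degree. Hence $g$ is good in the sense of Definition \ref{defn:Good_Riemannian_metric}.

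Given that $g$ is good, Theorem \ref{thm:Good_metric_implies_positive_lower_bound_small_eigenvalues} immediately supplies a positive constant $\mu_0 = \mu_0(g,\kappa(P))$ such that $\mu_g(A) \geq \mu_0$ for every connection $A$ of Sobolev class $W^{1,2}$ on $P$ with $F_A^{+,g} = 0$ a.e. on $X$; in particular, this gives the first stated inequality, $\mu_g(A) \geq \mu_0$ for all $[A] \in M(P,g)$. Next, Theorem \ref{thm:Good_metric_implies_positive_lower_bound_small_eigenvalues_almost_ASD} (applied with the same good metric $g$) furnishes a constant $\eps = \eps(g,\kappa(P)) \in (0,1]$ such that any $W^{1,2}$ connection $A$ on $P$ with $\|F_A^{+,g}\|_{L^2(X)} \leq \eps$ satisfies $\mu_g(A) \geq \mu_0/2$, yielding the second inequality on $\sB_\eps(P,g)$ defined in \eqref{eq:Open_neighborhood_asd_moduli_space_FA+_L2_small}.

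The only nontrivial point in this plan is the assembly of the generic set so that it serves \emph{all} strata of the Uhlenbeck compactification simultaneously; this is exactly where one needs the bundles $P_l$ to range over a finite list, which is guaranteed by the bound $L = L(\kappa(P))$ in \eqref{eq:Uhlenbeck_compactification} and by the topological constraint $\eta(P_l)=\eta(P)$ from Sedlacek. Once that bookkeeping is in place, the corollary follows by direct quotation of the two eigenvalue bounds from \cite[Section 34]{Feehan_yang_mills_gradient_flow_v2}.
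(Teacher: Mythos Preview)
Your proposal is correct and follows essentially the same approach as the paper: you observe that a generic metric lies in the finite intersection $\bigcap_l \sC(X,\kappa(P_l))$ and is therefore good in the sense of Definition~\ref{defn:Good_Riemannian_metric}, and then you invoke Theorems~\ref{thm:Good_metric_implies_positive_lower_bound_small_eigenvalues} and~\ref{thm:Good_metric_implies_positive_lower_bound_small_eigenvalues_almost_ASD}. Your write-up is in fact slightly more complete than the paper's one-line justification, which cites only Theorem~\ref{thm:Good_metric_implies_positive_lower_bound_small_eigenvalues} and leaves the appeal to Theorem~\ref{thm:Good_metric_implies_positive_lower_bound_small_eigenvalues_almost_ASD} (needed for the second inequality on $\sB_\eps(P,g)$) implicit.
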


Corollary \ref{cor:Least_eigenvalue_function_positive_on_Uhlenbeck_compactification_pi1_X} follows from the observation that $g \in \cap_{l=1}^L \sC(X,\kappa(P_l))$, where $\sC(X,\kappa(P_l))$ is as in Theorem \ref{thm:Donaldson-Kronheimer_Corollary_4-3-15_and_18_and_Proposition_4-3-20}, and thus $g$ is good in the sense of Definition \ref{defn:Good_Riemannian_metric}, together with Theorem \ref{thm:Good_metric_implies_positive_lower_bound_small_eigenvalues}. The proof of Corollary \ref{cor:Least_eigenvalue_function_positive_on_Uhlenbeck_compactification_pi1_X} extends without change to give

\begin{cor}[Uniform positive lower bound for the smallest eigenvalue function when $g$ is generic and $G$ is $\SO(3)$]
\label{cor:Least_eigenvalue_function_positive_on_Uhlenbeck_compactification}
(See \cite[Corollary 34.29]{Feehan_yang_mills_gradient_flow_v2}.)
Assume the hypotheses of Corollary \ref{cor:Least_eigenvalue_function_positive_on_Uhlenbeck_compactification_pi1_X}, but replace the hypothesis on $\pi_1(X)$ by the requirement that $G = \SO(3)$ and no principal $\SO(3)$-bundle $P_l$ over $X$ appearing in the Uhlenbeck compactification, $\bar M(P,g)$ in \eqref{eq:Uhlenbeck_compactification}, supports a flat connection. Then the conclusions of Corollary \ref{cor:Least_eigenvalue_function_positive_on_Uhlenbeck_compactification_pi1_X} continue to hold.
\end{cor}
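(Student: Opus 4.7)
The plan is to mimic the proof of Corollary \ref{cor:Least_eigenvalue_function_positive_on_Uhlenbeck_compactification_pi1_X} nearly verbatim, since the two conclusions to be established are identical and only the argument used to verify that the Riemannian metric $g$ is good in the sense of Definition \ref{defn:Good_Riemannian_metric} needs to be modified. The strategy is: (i) invoke a generic metric transversality result to show that for $[g]$ in a suitable open dense subset of $\sC(X)$ and for every principal $\SO(3)$-bundle $P_l$ appearing in the Uhlenbeck compactification $\bar M(P,g)$ of \eqref{eq:Uhlenbeck_compactification}, the least eigenvalue $\mu_g(A)$ of Definition \ref{defn:Taubes_1982_3-1} is strictly positive on $M(P_l,g)$; (ii) conclude that $g$ is good; (iii) invoke Theorems \ref{thm:Good_metric_implies_positive_lower_bound_small_eigenvalues} and \ref{thm:Good_metric_implies_positive_lower_bound_small_eigenvalues_almost_ASD} to produce the two uniform lower bounds claimed.

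First I would apply the $\SO(3)$-version of the Freed-Uhlenbeck and Donaldson-Kronheimer generic metric theorems, replacing the representation-theoretic hypothesis on $\pi_1(X)$ in Theorem \ref{thm:Donaldson-Kronheimer_Corollary_4-3-15_and_18_and_Proposition_4-3-20} and Corollary \ref{cor:Donaldson-Kronheimer_Corollary_4-3-15_and_18_and_Proposition_4-3-20} by the no-flat-connection hypothesis supplied here. This produces, for each $l = 0, 1, \dots, L$, an open dense subset $\sC(X, \kappa(P_l)) \subset \sC(X)$ such that $[g] \in \sC(X, \kappa(P_l))$ implies $\Coker d_A^{+,g} = 0$, equivalently $\mu_g(A) > 0$, for every $[A] \in M(P_l, g)$. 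The role previously played by the absence of non-trivial representations of $\pi_1(X)$ in $G$, namely ruling out reducible anti-self-dual connections on each $P_l$, is played here by the assumption that none of the $P_l$ admits a flat connection: for $G = \SO(3)$, reducible ASD connections are precisely abelian reductions, and on a bundle carrying no flat connection these abelian reductions form a transversally cut-out, generically missed locus in the Donaldson-Kronheimer Sard-Smale framework. Setting $\sC(X, \kappa(P)) := \bigcap_{l=0}^L \sC(X, \kappa(P_l))$ produces an open dense subset of $\sC(X)$ as a finite intersection of open dense subsets, and any representative $g$ of a class in this intersection is good in the sense of Definition \ref{defn:Good_Riemannian_metric}. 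Theorem \ref{thm:Good_metric_implies_positive_lower_bound_small_eigenvalues} then yields a constant $\mu_0 = \mu_0(g, \kappa(P)) > 0$ with $\mu_g(A) \geq \mu_0$ for all $[A] \in M(P, g)$, while Theorem \ref{thm:Good_metric_implies_positive_lower_bound_small_eigenvalues_almost_ASD} yields $\eps = \eps(g, \kappa(P)) \in (0,1]$ with $\mu_g(A) \geq \mu_0/2$ for all $[A] \in \sB_\eps(P, g)$, matching the two asserted inequalities.

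The main obstacle is the first step: justifying that the $\SO(3)$ Donaldson-Kronheimer transversality argument genuinely goes through with the no-flat-connection hypothesis substituted for either the $\pi_1(X)$ hypothesis or the $w_2(P) \neq 0$ hypothesis. This rests on the facts reviewed in Remark \ref{rmk:Morgan-Mrowka_hypothesis_no_flat_connection_in_Uhlenbeck_compactification} and the detailed parametrix-based treatment in \cite[Section 34]{Feehan_yang_mills_gradient_flow_v2}: on an $\SO(3)$-bundle $P_l$ admitting no flat connection, the reducible stratum of the space of connections has strictly positive codimension in the Kuranishi model, so a standard application of the Sard-Smale theorem to the relevant Fredholm section over the Banach manifold $\sC(X)$ exhibits an open dense set of conformal classes whose ASD moduli space consists entirely of irreducible connections with $\Coker d_A^{+,g} = 0$. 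Once this generic-metric statement is in hand, the remainder of the argument is identical to that of Corollary \ref{cor:Least_eigenvalue_function_positive_on_Uhlenbeck_compactification_pi1_X}, as anticipated in the paragraph introducing the statement.
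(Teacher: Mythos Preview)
Your proposal is correct and follows essentially the same approach as the paper, which itself offers no separate argument beyond the sentence that ``the proof of Corollary \ref{cor:Least_eigenvalue_function_positive_on_Uhlenbeck_compactification_pi1_X} extends without change'' together with a citation to \cite[Corollary 34.29]{Feehan_yang_mills_gradient_flow_v2}; your explicit identification of the Kronheimer--Mrowka generic-metrics extension (Remark \ref{rmk:Kronheimer-Mrowka_enhancement_generic_metrics_theorem_non-simply-connected}) as the replacement transversality input, followed by the same finite-intersection argument and appeal to Theorems \ref{thm:Good_metric_implies_positive_lower_bound_small_eigenvalues} and \ref{thm:Good_metric_implies_positive_lower_bound_small_eigenvalues_almost_ASD}, is exactly the intended expansion. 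One minor imprecision worth flagging: the $\pi_1(X)$ hypothesis in the earlier corollary serves to exclude non-trivial \emph{flat} connections rather than non-flat reducibles, so your description of its role is slightly off---but since the no-flat-connection hypothesis here plays precisely that same role, and non-flat reducibles are handled for generic $g$ by the standard Donaldson--Kronheimer arguments, this does not affect the soundness of your proof.
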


See Remark \ref{rmk:Morgan-Mrowka_hypothesis_no_flat_connection_in_Uhlenbeck_compactification} for a discussion of the apparently technical hypothesis in Corollary \ref{cor:Least_eigenvalue_function_positive_on_Uhlenbeck_compactification} on the exclusion of flat connections in $\bar M(P,g)$.

\begin{rmk}[Kronheimer-Mrowka extension of generic metrics theorems to non-simply-connected four-dimensional manifolds]
\label{rmk:Kronheimer-Mrowka_enhancement_generic_metrics_theorem_non-simply-connected}
An enhancement due to Kronheimer and Mrowka \cite[Corollary 2.5]{KMStructure} of the generic metrics theorems of Freed and Uhlenbeck \cite[pp. 69--73]{FU} and Donaldson and Kronheimer \cite[Sections 4.3.3, 4.3.4, and 4.3.5]{DK}, for a generic Riemannian metric, $g$, on $X$ ensures the following holds even when $\pi_1(X)$ is non-trivial: For $b^+(X) \geq 0$, every connection, $A$, that is anti-self-dual with respect to $g$ on a principal $\SO(3)$ bundle over $X$ has $\Coker d_A^{+,g} = 0$, unless $A$ is reducible or flat\footnote{Kronheimer and Mrowka assume in their \cite[Definition 2.1]{KMStructure} of an \emph{admissible} four-dimensional manifold, $X$, that $b^+(X)-b^1(X)$ is odd and $b^+(X)>1$, but do not use those constraints in their proof of \cite[Corollary 2.5]{KMStructure}.}.
\end{rmk}

\section{Good Riemannian metrics and energy gap for Yang-Mills connections}
\label{sec:Good_Riemannian_metrics_and_energy_gap_for_Yang-Mills_connections}
It remains to apply the argument employed by Min-Oo in \cite[Section 3]{Min-Oo_1982} and extend his proof of \cite[Theorem 2]{Min-Oo_1982} from the case of a positive Riemannian metric to the more general case of a good Riemannian metric, thus establishing Theorem \ref{mainthm:Min-Oo_2} and hence Corollary \ref{maincor:Min-Oo_2}.

\subsection{An \apriori estimate for the operator $d_A^{+,*}$}
\label{subsec:Apriori_estimates_dA+*_and_dA+d_A+*}
For $u \in L^r(X;\Lambda^p\otimes\ad P)$, where $1 \leq r < \infty$ and $p\geq 0$ is an integer, we denote
\begin{equation}
\label{eq:Sobolev_norms}
\|u\|_{W_A^{k,r}(X)} := \left(\sum_{j=0}^k \int_X |\cov_A^ju|^r \,d\vol_g \right)^{1/r},
\end{equation}
by analogy with \cite[Definitions 2.2 and 2.3]{Aubin_1998}, where $\nabla_A$ is the covariant derivative induced by the connection, $A$, on $P$ and the Levi-Civita connection defined by the Riemannian metric, $g$, on $T^*X$, and all associated vector bundles over $X$. The Sobolev spaces, $W_A^{k,r}(X; \Lambda^p\otimes\ad P)$, are the completions of $\Omega^p(X;\ad P)$ with respect to the norms \eqref{eq:Sobolev_norms}, while the Sobolev spaces, $W_A^{k,r}(X; \Lambda^\pm\otimes\ad P)$, are the corresponding completions of $\Omega^\pm(X;\ad P)$; when $r=2$, we abbreviate these Sobolev spaces by $H_A^k(X; \Lambda^p\otimes\ad P)$ and $H_A^k(X; \Lambda^\pm\otimes\ad P)$, respectively.

We recall the following useful \apriori estimate from \cite[Lemma 6.6]{FLKM1}, based in turn on estimates due to Taubes in \cite[Lemma 5.2]{TauSelfDual} and in \cite[Appendix A]{TauIndef}.

\begin{lem}[An \apriori $L^2$ estimate for $d_A^{+,*}$ and $L^{4/3}$ estimate for $d_A^+d_A^{+,*}$]
\label{lem:Feehan_Leness_6-6}
(See \cite[Lemma 6.6]{FLKM1}.)
Let $X$ be a closed, four-dimensional, oriented, smooth manifold with Riemannian metric, $g$. Then there are positive constants, $c = c(g)$ and $\eps = \eps(g) \in (0,1]$, with the following significance. If $G$ is a compact Lie group, $A$ is a connection of Sobolev class $W^{2,2}$ on a principal $G$-bundle $P$ over $X$ with
\begin{equation}
\label{eq:L2norm_FA+_leq_small}
\|F_A^+\|_{L^2(X)} \leq \eps,
\end{equation}
and $v \in \Omega^+(X;\ad P)$, then\footnote{We correct a typographical error in the statement of inequality (2) in \cite[Lemma 6.6]{FLKM1}, where the term $\|v\|_{L^2(X)}$ was omitted on the right-hand side.}
\begin{align}
\label{eq:Feehan_Leness_6-6-1_H1Av_L2normdA+*v}
\|v\|_{H_A^1(X)}
&\leq
c\left(\|d_A^{+,*}v\|_{L^2(X)}+\|v\|_{L^2(X)}\right),
\\
\label{eq:Feehan_Leness_6-6-1_L4v_L2dA+*v}
\|v\|_{L^4(X)}
&\leq
c\left(\|d_A^{+,*}v\|_{L^2(X)}+\|v\|_{L^2(X)}\right),
\\
\label{eq:Feehan_Leness_6-6-2_L2dA+*v_L4over3dA+dA+*v}
\|d_A^{+,*}v\|_{L^2(X)}
&\leq
c\left(\|d_A^+d_A^{+,*}v\|_{L^{4/3}(X)}+\|v\|_{L^2(X)}\right),
\\
\label{eq:Feehan_Leness_6-6-2_H1Av_L4over3dA+dA+*v}
\|v\|_{H_A^1(X)}
&\leq
c\left(\|d_A^+d_A^{+,*}v\|_{L^{4/3}(X)}+\|v\|_{L^2(X)}\right).
\end{align}
\end{lem}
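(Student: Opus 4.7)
\medskip

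\textbf{Plan for proving Lemma \ref{lem:Feehan_Leness_6-6}.} My plan is to prove the four inequalities in order, since each one builds on the preceding. The main analytic engine is the Bochner-Weitzenb\"ock formula \eqref{eq:Freed_Uhlenbeck_6-26} combined with the Sobolev embedding $W^{1,2}(X) \hookrightarrow L^4(X)$ (valid since $\dim X = 4$) and Kato's inequality $|\nabla|v|| \leq |\nabla_A v|$. The smallness hypothesis \eqref{eq:L2norm_FA+_leq_small} enters exactly once: it is what lets us absorb a dangerous quartic term into the left-hand side.

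\smallskip

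\textbf{Step 1: the $H_A^1$-bound \eqref{eq:Feehan_Leness_6-6-1_H1Av_L2normdA+*v}.} I would take the $L^2$-pairing of the Bochner-Weitzenb\"ock formula \eqref{eq:Freed_Uhlenbeck_6-26} with $v \in \Omega^+(X;\ad P)$ and integrate, yielding
\begin{equation*}
2\|d_A^{+,*}v\|_{L^2(X)}^2 = \|\nabla_A v\|_{L^2(X)}^2 + \int_X \left\langle \left(\tfrac{1}{3}R_g - 2w_g^+\right)v, v\right\rangle\,d\vol_g + \int_X \langle \{F_A^+, v\}, v\rangle\,d\vol_g.
\end{equation*}
The curvature-of-$g$ term is bounded by $C(g)\|v\|_{L^2(X)}^2$, and the nonlinear term is controlled by H\"older and Sobolev:
\begin{equation*}
\left|\int_X \langle \{F_A^+, v\}, v\rangle\,d\vol_g\right| \leq C\|F_A^+\|_{L^2(X)}\|v\|_{L^4(X)}^2 \leq C'\eps\left(\|\nabla_A v\|_{L^2(X)}^2 + \|v\|_{L^2(X)}^2\right),
\end{equation*}
using the Kato-plus-Sobolev inequality $\|v\|_{L^4(X)} \leq C(\|\nabla_A v\|_{L^2(X)} + \|v\|_{L^2(X)})$. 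Choosing $\eps = \eps(g)$ small enough so that $C'\eps \leq 1/2$ allows me to absorb $\|\nabla_A v\|_{L^2(X)}^2$ into the left-hand side and conclude \eqref{eq:Feehan_Leness_6-6-1_H1Av_L2normdA+*v}.

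\smallskip

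\textbf{Step 2: the $L^4$-bound \eqref{eq:Feehan_Leness_6-6-1_L4v_L2dA+*v}.} This follows immediately by chaining the Kato/Sobolev inequality $\|v\|_{L^4(X)} \leq C(\|\nabla_A v\|_{L^2(X)} + \|v\|_{L^2(X)}) \leq C\|v\|_{H_A^1(X)}$ with the bound from Step 1.

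\smallskip

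\textbf{Step 3: the $L^{4/3}$ estimate \eqref{eq:Feehan_Leness_6-6-2_L2dA+*v_L4over3dA+dA+*v}.} I would pair $d_A^{+,*}v$ with itself and integrate by parts to obtain $\|d_A^{+,*}v\|_{L^2(X)}^2 = \langle d_A^+ d_A^{+,*}v, v\rangle_{L^2(X)}$; applying the H\"older inequality with exponents $4/3$ and $4$, followed by Step 2,
\begin{equation*}
\|d_A^{+,*}v\|_{L^2(X)}^2 \leq \|d_A^+ d_A^{+,*}v\|_{L^{4/3}(X)}\|v\|_{L^4(X)} \leq C\|d_A^+ d_A^{+,*}v\|_{L^{4/3}(X)}\left(\|d_A^{+,*}v\|_{L^2(X)} + \|v\|_{L^2(X)}\right).
\end{equation*}
A Young's-inequality absorption (on the mixed term $\|d_A^+ d_A^{+,*}v\|_{L^{4/3}(X)}\|d_A^{+,*}v\|_{L^2(X)}$) then yields \eqref{eq:Feehan_Leness_6-6-2_L2dA+*v_L4over3dA+dA+*v}.

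\smallskip

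\textbf{Step 4 and main difficulty.} The final inequality \eqref{eq:Feehan_Leness_6-6-2_H1Av_L4over3dA+dA+*v} is a direct concatenation of Steps 1 and 3. The real obstacle in this argument is not any single step but rather the delicate tuning in Step 1: one must verify that the bilinear form $\{F_A^+, v\}$ genuinely behaves like a pointwise product and can be controlled by $\|F_A^+\|_{L^2}\|v\|_{L^4}^2$, and that the constants coming from Kato, Sobolev, and the curvature-of-$g$ terms are all absolute constants of $g$ alone. Once $\eps(g)$ is chosen below the reciprocal of the Sobolev-Kato constant, the absorption succeeds uniformly in $A$, and no gauge-fixing or compactness argument is required.
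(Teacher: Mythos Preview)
Your proposal is correct and is essentially the argument underlying the paper's proof. The paper itself does not write out the proof of \eqref{eq:Feehan_Leness_6-6-1_H1Av_L2normdA+*v} and \eqref{eq:Feehan_Leness_6-6-2_L2dA+*v_L4over3dA+dA+*v} but simply cites \cite[Lemma~6.6]{FLKM1} (whose proof, in turn following Taubes, is precisely your Bochner--Weitzenb\"ock/Kato/Sobolev/absorption scheme of Steps~1 and~3); it then derives \eqref{eq:Feehan_Leness_6-6-1_L4v_L2dA+*v} from \eqref{eq:Feehan_Leness_6-6-1_H1Av_L2normdA+*v} via Kato and the Sobolev embedding $H^1(X)\hookrightarrow L^4(X)$, exactly as in your Step~2, and notes that \eqref{eq:Feehan_Leness_6-6-2_H1Av_L4over3dA+dA+*v} is a trivial consequence of \eqref{eq:Feehan_Leness_6-6-1_H1Av_L2normdA+*v} and \eqref{eq:Feehan_Leness_6-6-2_L2dA+*v_L4over3dA+dA+*v}, exactly as in your Step~4.
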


\begin{proof}
The \apriori estimates \eqref{eq:Feehan_Leness_6-6-1_H1Av_L2normdA+*v} and \eqref{eq:Feehan_Leness_6-6-2_L2dA+*v_L4over3dA+dA+*v} are given by \cite[Lemma 6.6]{FLKM1} and \eqref{eq:Feehan_Leness_6-6-2_H1Av_L4over3dA+dA+*v} is a trivial consequence of those. The \apriori estimate \eqref{eq:Feehan_Leness_6-6-1_L4v_L2dA+*v} is obtained by combining \eqref{eq:Feehan_Leness_6-6-1_H1Av_L2normdA+*v} with the Kato Inequality \cite[Equation (6.20)]{FU} and the Sobolev embedding $H^1(X) \hookrightarrow L^4(X)$.
\end{proof}

See \cite[Section 34.2]{Feehan_yang_mills_gradient_flow_v2} for many more \apriori estimates of this kind. In our application, we shall only need the \apriori estimate \eqref{eq:Feehan_Leness_6-6-1_H1Av_L2normdA+*v}. We note the following immediate generalization of the \apriori estimate \cite[Equation (3.15)]{Min-Oo_1982} from the case of a positive Riemannian metric to that of a Riemannian metric, $g$, such that $\mu_g(A)>0$.

\begin{cor}[An \apriori $L^2$ estimate for $d_A^{+,*}$]
\label{cor:Feehan_Leness_6-6}
Let $X$ be a closed, four-dimensional, oriented, smooth manifold with a good Riemannian metric, $g$. Then there are positive constants, $c = c(g) \in [1,\infty)$ and $\eps = \eps(g) \in (0,1]$, with the following significance. Let $G$ be a compact Lie group and $A$ be a connection of Sobolev class $W^{1,2}$ on a principal $G$-bundle $P$ over $X$ obeying \eqref{eq:L2norm_FA+_leq_small} and $\mu_g(A)>0$. If $v \in W_A^{1,2}(X;\Lambda^{+,g}\otimes\ad P)$, then
\begin{equation}
\label{eq:Feehan_Leness_6-6-1_H1Av_L2normdA+*v_good_g}
\|v\|_{W_A^{1,2}(X)}
\leq
c\left(1 + 1/\sqrt{\mu_g(A)}\right)\|d_A^{+,*_g}v\|_{L^2(X)},
\end{equation}
where $\mu_g(A)$ is as in Definition \ref{defn:Taubes_1982_3-1}.
\end{cor}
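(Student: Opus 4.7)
The plan is to derive the estimate directly from the \apriori estimate \eqref{eq:Feehan_Leness_6-6-1_H1Av_L2normdA+*v} of Lemma \ref{lem:Feehan_Leness_6-6}, using the spectral hypothesis $\mu_g(A) > 0$ only to absorb the $\|v\|_{L^2(X)}$ term on the right-hand side. No additional analytic input is needed; the constants $c(g)$ and $\eps(g)$ can be taken to be those already produced by Lemma \ref{lem:Feehan_Leness_6-6} (possibly, for regularity, first replacing $A$ by a $W^{2,2}$-approximant and then passing to the limit, since both sides of \eqref{eq:Feehan_Leness_6-6-1_H1Av_L2normdA+*v} are continuous in $A$ with respect to the $W^{1,2}$-topology on connections).

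First I would invoke Lemma \ref{lem:Feehan_Leness_6-6}, whose hypothesis is satisfied because $\|F_A^{+,g}\|_{L^2(X)} \le \eps$, to obtain
\begin{equation*}
\|v\|_{W_A^{1,2}(X)} \;=\; \|v\|_{H_A^1(X)} \;\le\; c\left(\|d_A^{+,*_g}v\|_{L^2(X)} + \|v\|_{L^2(X)}\right),
\end{equation*}
for every $v \in W_A^{1,2}(X; \Lambda^{+,g}\otimes\ad P)$, where $c = c(g) \in [1,\infty)$. Next, I would invoke the variational characterization \eqref{eq:Least_eigenvalue_dA+dA+*} of $\mu_g(A)$ from Definition \ref{defn:Taubes_1982_3-1}. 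By density of $\Omega^{+,g}(X;\ad P)$ in $W_A^{1,2}(X;\Lambda^{+,g}\otimes\ad P)$, the infimum \eqref{eq:Least_eigenvalue_dA+dA+*} extends to all $v$ in the Sobolev completion, and thus
\begin{equation*}
\|v\|_{L^2(X)} \;\le\; \frac{1}{\sqrt{\mu_g(A)}}\,\|d_A^{+,*_g}v\|_{L^2(X)}, \qquad \forall\, v \in W_A^{1,2}(X; \Lambda^{+,g}\otimes\ad P),
\end{equation*}
where we have used the hypothesis $\mu_g(A) > 0$ to divide.

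Substituting the second inequality into the first yields
\begin{equation*}
\|v\|_{W_A^{1,2}(X)} \;\le\; c\left(1 + \frac{1}{\sqrt{\mu_g(A)}}\right) \|d_A^{+,*_g}v\|_{L^2(X)},
\end{equation*}
which is the desired bound \eqref{eq:Feehan_Leness_6-6-1_H1Av_L2normdA+*v_good_g}. There is no conceptual obstacle here; the only point requiring mild care is the density argument used to extend \eqref{eq:Least_eigenvalue_dA+dA+*} from smooth self-dual two-forms to $W_A^{1,2}$ sections, but this is standard since $d_A^{+,*_g}$ is a bounded operator from $W_A^{1,2}$ to $L^2$.
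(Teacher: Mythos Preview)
Your proof is correct and follows essentially the same approach as the paper: invoke Lemma \ref{lem:Feehan_Leness_6-6} to obtain the estimate with the extra $\|v\|_{L^2(X)}$ term, then use the definition of $\mu_g(A)$ to bound $\|v\|_{L^2(X)} \le \mu_g(A)^{-1/2}\|d_A^{+,*_g}v\|_{L^2(X)}$ and substitute. The paper's proof is in fact just these two lines, without the additional remarks about density and approximation you include.
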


\begin{proof}
The conclusion follows from Lemma \ref{lem:Feehan_Leness_6-6} and the Definition \ref{defn:Taubes_1982_3-1} of $\mu_g(A)$ since
\[
\sqrt{\mu_g(A)}\,\|v\|_{L^2(X)} \leq \|d_A^{+,*_g}v\|_{L^2(X)}.
\]
This completes the proof.
\end{proof}

\subsection{Completion of the proofs of Theorem \ref{mainthm:Min-Oo_2} and Corollary \ref{maincor:Min-Oo_2}}
We now have all the ingredients required to conclude the

\begin{proof}[Proof of Theorem \ref{mainthm:Min-Oo_2}]
For a connection, $A$, of class $W^{2,2}$ on $P$ with $\|F_A^{+,g}\|_{L^2(X)} < \eps$, where $\eps = \eps(g,\kappa(P)) \in (0,1]$ is as in the hypotheses of Corollary \ref{cor:Feehan_Leness_6-6}, we can apply the \apriori estimate \eqref{eq:Feehan_Leness_6-6-1_H1Av_L2normdA+*v_good_g} to $v = F_A^+$ to obtain
\begin{equation}
\label{eq:L2norm_FAplus_bounded_by_L2norm_dA*FAplus}
\|F_A^{+,g}\|_{L^2(X)} \leq c\left(1 + 1/\sqrt{\mu_0/2}\right)\|d_A^{+,*_g}F_A^{+,g}\|_{L^2(X)},
\end{equation}
where $c=c(g)$ is as in Corollary \ref{cor:Feehan_Leness_6-6} and $\mu_0 = \mu_0(g,\kappa(P))$ is the uniform positive lower bound for $2\mu_g(A)$ provided by Theorem \ref{thm:Good_metric_implies_positive_lower_bound_small_eigenvalues_almost_ASD}. Just as in the paragraph following \cite[Equation (3.15)]{Min-Oo_1982}, we have
\[
d_A^{+,*_g}F_A^{+,g} = -\frac{1}{2}*_{g}d_{A}*_{g}(1+*_g)F_{A} = \frac{1}{2}d_A^{*_g}F_A.
\]
By hypothesis, $A$ is Yang-Mills with respect to $g$, thus $d_A^{*_g}F_A = 0$ a.e. on $X$, and so
\[
d_A^{+,*_g}F_A^{+,g} = 0 \quad\hbox{a.e. on } X.
\]
Therefore, $F_A^{+,g} = 0$ a.e. on $X$ by \eqref{eq:L2norm_FAplus_bounded_by_L2norm_dA*FAplus} and $A$ is anti-self-dual with respect to $g$.
\end{proof}

\begin{proof}[Proof of Corollary \ref{maincor:Min-Oo_2}]
The conclusions follow from Theorem \ref{mainthm:Min-Oo_2} and the positive uniform lower bound on $\mu_g(A)$ provided by Corollaries \ref{cor:Least_eigenvalue_function_positive_on_Uhlenbeck_compactification_pi1_X} or \ref{cor:Least_eigenvalue_function_positive_on_Uhlenbeck_compactification}.
\end{proof}

\appendix

\section{Uhlenbeck continuity of the least eigenvalue of $d_A^+d_A^{+,*}$ with respect to the connection}
\label{sec:Continuity_least_eigenvalue_d_A+d_A+*_wrt_connection}
For completeness, we include the statement and proof of our \cite[Proposition 34.14]{Feehan_yang_mills_gradient_flow_v2} (restated here as Proposition \ref{prop:Lp_loc_continuity_least_eigenvalue_wrt_connection}) from \cite[Section 34.3]{Feehan_yang_mills_gradient_flow_v2} together with required preparatory lemmata.

In order to extend Lemma \ref{lem:Positive_metric_implies_positive_lower_bound_small_eigenvalues} from the simple case of a \emph{positive} Riemannian metric, $g$, though arbitrary compact Lie group, $G$, to the more difficult case of a \emph{generic} Riemannian metric, $g$, and Lie groups $G=\SU(2)$ or $\SO(3)$, we shall need to closely examine the continuity properties of the least eigenvalue of the elliptic operator $d_A^+d_A^{+,*}$ with respect to the connection, $A$. We begin by recalling the following $L^p$ analogue of the \apriori $L^\infty$ estimate \cite[Lemma 5.3, Item (1)]{FeehanSlice}.

\begin{lem}[An \apriori $L^p$ estimate for the connection Laplace operator]
\label{lem:Feehan_5-3-1_Lp}
(See \cite[Lemma 34.5]{Feehan_yang_mills_gradient_flow_v2}.)
Let $X$ be a closed, smooth manifold of dimension $d\geq 4$ and Riemannian metric, $g$, and $q \in (d,\infty)$. Then there is a positive constant, $c = c(g,q)$, with the following significance. Let $r \in (d/3, d/2)$ be defined by $1/r = 2/d + 1/q$. Let $A$ be a Riemannian connection of class $C^\infty$ on a Riemannian vector bundle $E$ over $X$ with covariant derivative $\nabla_A$ and curvature $F_A$. If $v \in C^\infty(X;E)$, then
\begin{equation}
\label{eq:Feehan_5-3-1_dimension_Lp}
\|v\|_{L^q(X)}
\leq
c\left(\|\cov_A^*\cov_Av\|_{L^r(X)} + \|v\|_{L^r(X)}\right).
\end{equation}
\end{lem}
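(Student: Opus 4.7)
The plan is to reduce this vector-bundle estimate to a scalar elliptic estimate via the \emph{Kato inequality}, which strips out all connection-dependence beyond what is already present in $\nabla_A^*\nabla_A v$, and then to apply classical $L^p$ elliptic regularity for the scalar Laplacian on $(X,g)$ together with the Sobolev embedding $W^{2,r}(X)\hookrightarrow L^q(X)$. Because the resulting inequality is posed for the scalar function $|v|$ with the ordinary Laplacian $\Delta$, the constant will depend only on $(g,q)$ and not on $A$, as required.

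First, I would establish the distributional Kato inequality
$$\Delta |v|\;\leq\;|\nabla_A^*\nabla_A v|\quad\text{on }X,$$
for every smooth section $v\in C^\infty(X;E)$. The pointwise identity
$$\tfrac12\Delta|v|^2 \;=\; \langle\nabla_A^*\nabla_A v,v\rangle - |\nabla_A v|^2$$
combined with the pointwise Kato inequality $|\nabla|v||^2\leq|\nabla_A v|^2$ gives, away from the zero set $\{v=0\}$,
$$|v|\,\Delta|v| \;\leq\; \langle\nabla_A^*\nabla_A v,v\rangle - |\nabla_A v|^2 + |\nabla|v||^2 \;\leq\; |v|\cdot|\nabla_A^*\nabla_A v|.$$
To handle the zero set, I would regularize by $v_\eps:=(|v|^2+\eps^2)^{1/2}$, derive the smooth inequality $\Delta v_\eps\leq|\nabla_A^*\nabla_A v|$ uniformly in $\eps>0$, and pass to the limit $\eps\to 0^+$ tested against non-negative smooth functions on $X$.

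Next, since $X$ is closed, the scalar operator $\Delta+1$ is invertible on $X$, so there is a unique $w\in W^{2,r}(X)$ solving
$$(\Delta+1)\,w \;=\; |\nabla_A^*\nabla_A v|+|v| \;=:\; f\;\in\; L^r(X).$$
The weak maximum principle gives $w\geq 0$ since $f\geq 0$. The distributional inequality $(\Delta+1)(|v|-w)\leq 0$, together with the admissible test function $(|v|-w)^+\in W^{1,2}(X)$, yields
$$\int_X\bigl(|\nabla(|v|-w)^+|^2+((|v|-w)^+)^2\bigr)\,d\vol_g\;\leq\;0,$$
and hence $0\leq|v|\leq w$ almost everywhere on $X$.

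Finally, Calder\'on--Zygmund elliptic $L^r$ theory on the closed Riemannian manifold $(X,g)$ provides $\|w\|_{W^{2,r}(X)}\leq c_1(g,r)\|f\|_{L^r(X)}$ for all $r\in(1,\infty)$; since $r\in(d/3,d/2)\subset(1,\infty)$, this is in the allowed range. The Sobolev embedding $W^{2,r}(X)\hookrightarrow L^q(X)$, valid because $1/r-2/d=1/q>0$, yields $\|w\|_{L^q(X)}\leq c_2(g,q)\|w\|_{W^{2,r}(X)}$. Combining with the comparison $|v|\leq w$ and the triangle inequality $\|f\|_{L^r(X)}\leq\|\nabla_A^*\nabla_A v\|_{L^r(X)}+\|v\|_{L^r(X)}$ completes the proof with $c=c(g,q)=c_1(g,r)c_2(g,q)$ independent of $A$. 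The principal obstacle is the careful justification of the Kato inequality in the distributional sense up to and across the zero set $\{v=0\}$; once that reduction to a scalar problem is in hand, the remainder is standard. The exponent range $r\in(d/3,d/2)$ is precisely the interval on which both $r>1$ (needed for Calder\'on--Zygmund) and $1/r-2/d=1/q$ with $q>d$ (needed for the Sobolev embedding) simultaneously hold.
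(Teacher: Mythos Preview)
Your proof is correct and rests on the same underlying idea as the paper --- reduce to a scalar differential inequality and invert $d^*d+1$ --- but the execution differs in a way worth noting. The paper works directly with $|v|^2$: from the identity $|\nabla_A v|^2 + \tfrac12 d^*d|v|^2 = \langle \nabla_A^*\nabla_A v, v\rangle$ it drops the gradient term, applies the (positive) Green kernel of $d^*d+1$ pointwise, takes $L^t$ norms, and then uses H\"older with carefully matched exponents to split the product $\langle \nabla_A^*\nabla_A v, v\rangle$ before dividing through by $\|v\|_{L^{2t}}$. You instead pass through the Kato inequality to obtain a differential inequality for $|v|$ itself, invoke the maximum principle explicitly to compare $|v|$ with the $W^{2,r}$ solution $w$ of $(\Delta+1)w=|\nabla_A^*\nabla_A v|+|v|$, and then apply Calder\'on--Zygmund and Sobolev directly to $w$. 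Your argument is more modular and avoids the H\"older exponent bookkeeping, and the regularization $v_\eps=(|v|^2+\eps^2)^{1/2}$ handles the zero set cleanly; the paper's route never needs Kato or the zero-set regularization (since $|v|^2$ is smooth) and uses the positivity of the Green kernel in place of your explicit maximum-principle comparison. Both yield a constant depending only on $(g,q)$.
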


\begin{proof}
We adapt the proof of the estimate \cite[Lemma 5.3, Item (1)]{FeehanSlice}. For any $v\in C^\infty(X;E)$, we have the pointwise identity \cite[Equation (6.18)]{FU}, namely
$$
|\cov_Av|^2 + \frac{1}{2} d^*d|v|^2 = \langle\cov_A^*\cov_A v,v\rangle \quad\hbox{on } X,
$$
and thus,
$$
|\cov_Av|^2 + \frac{1}{2}(1+ d^*d)|v|^2 = \langle\cov_A^*\cov_A v,v\rangle + \frac{1}{2}|v|^2
\quad\hbox{on }X.
$$
As in \cite[Section 5.1]{FeehanSlice}, we let $\cG \in C^\infty(X\times X\less\Delta;\mathbb{R})$ denote the Green kernel for the augmented Laplace operator, $d^*d+1$, on $C^\infty(X;\mathbb{R})$, where $\Delta$ denotes the diagonal of $X\times X$. Using the preceding identity and the fact that
$$
\int_X \cG(x,\cdot)(d^*d+1)|v|^2\,dV = |v|^2(x), \quad\forall\, x \in X,
$$
we obtain
\begin{multline*}
\int_X \cG(x,\cdot)|\cov_Av|^2\,d\vol + \frac{1}{2} |v|^2(x)
\\
\leq
\int_X G(x,\cdot)|\langle \cov_A^*\cov_Av,v\rangle|\,d\vol
+ \frac{1}{2}\int_X G(x,\cdot)|v|^2\,d\vol, \quad\forall\, x \in X.
\end{multline*}
Writing the Green operator, $\cG := (d^*d + 1)^{-1}$, as
$$
(\cG v)(x) := \int_X \cG(x,\cdot)v\,d\vol, \quad\forall\, x \in X,
$$
we observe that $\cG$ extends to define a bounded operator,
$$
\cG:L^s(X) \to L^t(X),
$$
when $s \in (1, d/2)$ and $t\in (d/2,\infty)$ satisfy $1/s = 2/d + 1/t$ since \afortiori $\cG$ extends to a bounded operator,
$$
\cG:L^s(X) \to W^{2,s}(X),
$$
and $W^{2,s}(X) \hookrightarrow L^t(X)$ is a continuous embedding by \cite[Theorem 4.12]{AdamsFournier} when $t$ is defined as above. In particular, there is a positive constant, $c=c(g,s)$, such that
$$
\|\cG f\|_{L^t(X)} \leq c\|f\|_{L^s(X)}, \quad \forall\, f \in L^s(X).
$$
Therefore, expressing the preceding inequality for $v$ more compactly as
$$
\cG(|\cov_Av|^2) + \frac{1}{2} |v|^2
\leq
\cG|\langle \cov_A^*\cov_Av,v\rangle| + \frac{1}{2}\cG(|v|^2) \quad\hbox{on } X,
$$
and dropping the first term on the left, we find that, for $c = c(g,s)$,
\begin{align*}
\||v|^2\|_{L^t(X)}
&\leq
2\|\cG|\langle \cov_A^*\cov_Av,v\rangle|\|_{L^t(X)}
+ \|\cG(|v|^2)\|_{L^t(X)}
\\
&\leq c\|\langle \cov_A^*\cov_Av,v\rangle\|_{L^s(X)} + c\||v|^2\|_{L^s(X)}.
\end{align*}
Using
$$
\frac{1}{s} = \frac{2}{d} + \frac{1}{t} = \frac{2}{d} + \frac{1}{2t} + \frac{1}{2t}
= \frac{d+4t}{2dt} + \frac{1}{2t},
$$
we see that
\begin{align*}
\|\langle \cov_A^*\cov_Av,v\rangle\|_{L^s(X)}
&\leq
\|\cov_A^*\cov_Av\|_{L^{2dt/(d+4t)}(X)} \|v\|_{L^{2t}(X)},
\\
\||v|^2\|_{L^s(X)} &\leq \|v\|_{L^{2dt/(d+4t)}(X)} \|v\|_{L^{2t}(X)}.
\end{align*}
Therefore,
$$
\|v\|_{L^{2t}(X)}^2 \leq c\|\cov_A^*\cov_Av\|_{L^{2dt/(d+4t)}(X)} \|v\|_{L^{2t}(X)}
+ c\|v\|_{L^{2dt/(d+4t)}(X)} \|v\|_{L^{2t}(X)},
$$
and thus, for $v$ not identically zero,
$$
\|v\|_{L^{2t}(X)}
\leq
c\left(\|\cov_A^*\cov_Av\|_{L^{2dt/(d+4t)}(X)} + \|v\|_{L^{2dt/(d+4t)}(X)}\right),
$$
for any $t \in (d/2,\infty)$. But $(d+4t)/(2dt)=  1/(2t) + 2/d$, so writing $q = 2t \in (d,\infty)$ and $r = 2dt/(d+4t) \in (d/3, d/2)$ (for $t \in (d/2,\infty)$) yields \eqref{eq:Feehan_5-3-1_dimension_Lp}, where $1/r = 2/d + 1/q$.
\end{proof}

We now apply Lemma \ref{lem:Feehan_5-3-1_Lp} to prove the

\begin{lem}[An \apriori $L^p$ estimate for $d_A^+d_A^{+,*}$]
\label{lem:Feehan_5-3-1_Lp_dA+dA+*}
(See \cite[Lemma 34.6]{Feehan_yang_mills_gradient_flow_v2}.)
Let $X$ be a closed, four-dimensional, oriented, smooth manifold, $X$, with Riemannian metric, $g$, and $q \in [4,\infty)$. Then there are positive constants, $c = c(g,q)\in [1,\infty)$ and $\eps = \eps(g,q) \in (0,1]$, with the following significance. Let $r \in [4/3, 2)$ be defined by $1/r = 1/2 + 1/q$. Let $G$ be a compact Lie group and $A$ a connection of class $C^\infty$ on a principal bundle $P$ over $X$ that obeys the curvature bound \eqref{eq:L2norm_FA+_leq_small}. If $v \in \Omega^+(X;\ad P)$, then
\begin{equation}
\label{eq:Feehan_5-3-1_Lp_dA+dA+*}
\|v\|_{L^q(X)}
\leq
c\left(\|d_A^+d_A^{+,*}v\|_{L^r(X)} + \|v\|_{L^r(X)}\right).
\end{equation}
\end{lem}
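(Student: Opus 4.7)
The plan is to derive \eqref{eq:Feehan_5-3-1_Lp_dA+dA+*} from the $L^p$ Calder\'on--Zygmund estimate for the connection Laplacian in Lemma \ref{lem:Feehan_5-3-1_Lp} applied to $v$ as a section of the Riemannian vector bundle $E = \Lambda^+ \otimes \ad P$ (with $\nabla_A$ induced by $A$ and the Levi-Civita connection), combined with the Bochner--Weitzenb\"ock formula \eqref{eq:Freed_Uhlenbeck_6-26} and the hypothesis \eqref{eq:L2norm_FA+_leq_small} that $\|F_A^{+,g}\|_{L^2(X)} \leq \eps$ to absorb a curvature error term. First, I would specialize Lemma \ref{lem:Feehan_5-3-1_Lp} with $d=4$ to obtain
\[
\|v\|_{L^q(X)} \leq c_1\left(\|\nabla_A^*\nabla_A v\|_{L^r(X)} + \|v\|_{L^r(X)}\right), \qquad \tfrac{1}{r} = \tfrac{1}{2} + \tfrac{1}{q},
\]
for a constant $c_1 = c_1(g,q)$. (The original Lemma \ref{lem:Feehan_5-3-1_Lp} is phrased for $q \in (d,\infty)$, but the endpoint case $q=d=4$, $r=4/3$ is precisely the borderline Sobolev embedding $W^{2,4/3}(X) \embed L^4(X)$, which is available in the same proof and causes no essential change.)

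Next I would rearrange \eqref{eq:Freed_Uhlenbeck_6-26} as
\[
\nabla_A^{*}\nabla_A v = 2 d_A^{+}d_A^{+,*} v - \bigl(\tfrac{1}{3}R_g - 2 w_g^+\bigr) v - \{F_A^{+,g}, v\},
\]
and take $L^r(X)$-norms. The scalar curvature and Weyl terms are bounded pointwise by a constant depending only on $g$, so they contribute at most $c(g)\|v\|_{L^r(X)}$. The curvature-nonlinear term is estimated by H\"older's inequality, using precisely the exponent relation $1/r = 1/2 + 1/q$ that appears in the statement:
\[
\|\{F_A^{+,g}, v\}\|_{L^r(X)} \leq c_2 \|F_A^{+,g}\|_{L^2(X)}\,\|v\|_{L^q(X)},
\]
with $c_2$ depending only on the algebraic bilinear operation $\{\cdot,\cdot\}$ (hence on $g$ and $G$). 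Combining this with the previous display yields
\[
\|v\|_{L^q(X)} \leq C\bigl(\|d_A^+ d_A^{+,*} v\|_{L^r(X)} + \|v\|_{L^r(X)}\bigr) + C c_2 \|F_A^{+,g}\|_{L^2(X)}\,\|v\|_{L^q(X)}
\]
for some $C = C(g,q)$.

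Finally, I would choose $\eps = \eps(g,q) \in (0,1]$ so that $C c_2\,\eps \leq 1/2$; then under \eqref{eq:L2norm_FA+_leq_small} the last term can be absorbed into the left-hand side, giving \eqref{eq:Feehan_5-3-1_Lp_dA+dA+*} with $c = 2C$. The main technical point to watch is the endpoint $q=4$ in the preliminary Sobolev estimate, where one should verify the embedding $W^{2,4/3}(X) \embed L^4(X)$ is indeed used at its critical exponent; apart from this, the argument is entirely an application of the Bochner--Weitzenb\"ock identity plus an absorption trick based on the smallness of $F_A^{+,g}$ in $L^2(X)$.
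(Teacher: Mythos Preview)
Your argument for $q\in(4,\infty)$ is exactly the paper's: apply Lemma~\ref{lem:Feehan_5-3-1_Lp} with $d=4$, rewrite $\nabla_A^*\nabla_A v$ via the Bochner--Weitzenb\"ock formula \eqref{eq:Freed_Uhlenbeck_6-26}, use H\"older with $1/r=1/2+1/q$ to control $\|\{F_A^+,v\}\|_{L^r}$ by $c\|F_A^+\|_{L^2}\|v\|_{L^q}$, and absorb under \eqref{eq:L2norm_FA+_leq_small}.

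The one genuine gap is the endpoint $q=4$. Your parenthetical claim that Lemma~\ref{lem:Feehan_5-3-1_Lp} extends to $q=d$ with ``no essential change'' is not right: in that proof the relevant bound is $\cG:L^s\to L^t$ with $1/s=2/d+1/t$, and at $q=2t=4$ this forces $s=1$, where the argument via $\cG:L^s\to W^{2,s}$ breaks down (Calder\'on--Zygmund fails at $s=1$; equivalently, Hardy--Littlewood--Sobolev is only weak-type at the endpoint). The paper therefore treats $q=4$, $r=4/3$ by a separate route that bypasses Lemma~\ref{lem:Feehan_5-3-1_Lp} entirely: combine the Kato inequality, the Sobolev embedding $H^1(X)\hookrightarrow L^4(X)$, and the estimate \eqref{eq:Feehan_Leness_6-6-2_H1Av_L4over3dA+dA+*v} from Lemma~\ref{lem:Feehan_Leness_6-6} (which already uses the smallness hypothesis \eqref{eq:L2norm_FA+_leq_small}) to get
\[
\|v\|_{L^4(X)} \leq c\bigl(\|d_A^+d_A^{+,*}v\|_{L^{4/3}(X)} + \|v\|_{L^2(X)}\bigr),
\]
then interpolate $\|v\|_{L^2}\leq \|v\|_{L^{4/3}}^{1/2}\|v\|_{L^4}^{1/2}$ and use Young's inequality to absorb the $\|v\|_{L^4}$ contribution. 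You correctly flagged the endpoint as the point to watch; the fix is this alternate argument rather than pushing Lemma~\ref{lem:Feehan_5-3-1_Lp} to its boundary.
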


\begin{proof}
We first dispose of the simplest case, $q=4$ and $r=4/3$. We combine the Kato Inequality \cite[Equation (6.20)]{FU}, Sobolev embedding $H^1(X) \hookrightarrow L^4(X)$, and \apriori estimate \eqref{eq:Feehan_Leness_6-6-2_H1Av_L4over3dA+dA+*v} to give
$$
\|v\|_{L^4(X)}
\leq
c(\|d_A^+d_A^{+,*}v\|_{L^{4/3}(X)}+\|v\|_{L^2(X)}).
$$
Substituting the interpolation inequality, $\|v\|_{L^2(X)} \leq \|v\|_{L^{4/3}(X)}^{1/2}\|v\|_{L^4(X)}^{1/2}$, in the preceding estimate yields, for any $\zeta > 0$,
\begin{align*}
\|v\|_{L^4(X)}
&\leq
c\|d_A^+d_A^{+,*}v\|_{L^{4/3}(X)} + c\|v\|_{L^{4/3}(X)}^{1/2}\|v\|_{L^4(X)}^{1/2}
\\
&\leq c\|d_A^+d_A^{+,*}v\|_{L^{4/3}(X)} + \frac{c}{2\zeta}\|v\|_{L^{4/3}(X)} + \frac{c\zeta}{2}\|v\|_{L^4(X)}.
\end{align*}
We obtain \eqref{eq:Feehan_5-3-1_Lp_dA+dA+*} when $q=4$ and $r=4/3$ from the preceding inequality by choosing $\zeta = 1/c$.

For the remainder of the proof, we assume $q \in (4,\infty)$ and $r \in (4/3,2)$. The Bochner-Weitzenb\"ock formula \eqref{eq:Freed_Uhlenbeck_6-26}, namely,
$$
2d_A^+d_A^{+,*} = \cov_A^*\cov_A + \left(\frac{R}{3} - 2w^+\right) + \{F_A^+, \cdot\},
$$
yields, for $v \in \Omega^+(X;\ad P)$,
$$
\|\cov_A^*\cov_Av\|_{L^r(X)}
\leq
2\|d_A^+d_A^{+,*}v\|_{L^r(X)}
+ c\|v\|_{L^r(X)} + \|\{F_A^+, v\}\|_{L^r(X)},
$$
and some $c = c(g)$. Since $1/r = 1/2 + 1/q$ by hypothesis, we see that
$$
\|\{F_A^+, v\}\|_{L^r(X)} \leq c\|F_A^+\|_{L^2(X)} \|v\|_{L^q(X)},
$$
for some $c = c(g)$. Combining the preceding inequalities with the estimate \eqref{eq:Feehan_5-3-1_dimension_Lp} yields
$$
\|v\|_{L^q(X)}
\leq
c\|d_A^+d_A^{+,*}v\|_{L^r(X)} + c\|v\|_{L^r(X)} + c\|F_A^+\|_{L^2(X)} \|v\|_{L^q(X)},
$$
for some $c = c(g,q)$. Provided $c\|F_A^+\|_{L^2(X)} \leq 1/2$, rearrangement gives \eqref{eq:Feehan_5-3-1_Lp_dA+dA+*} when $r \in (4/3,2)$.
\end{proof}

The following proposition extends \cite[Lemmata 34.11, 34.12, and 34.13]{Feehan_yang_mills_gradient_flow_v2} in order to accommodate the weak notion of convergence described by Sedlacek in his \cite[Theorem 3.1]{Sedlacek}. In contrast to the Uhlenbeck convergence as defined in \cite[Condition 4.4.2]{DK}, Sedlacek replaces the usual strong $W_{\loc}^{k,p}(X\less\Sigma)$ convergence of connections, with $k\geq 1$ and $p\geq 2$ obeying $kp>4$, by weak $H_{\loc}^1(X\less\Sigma)$ convergence and strong $L_{\loc}^p(X\less\Sigma)$ convergence with $p\in [2,4)$. Here, $\Sigma = \{x_1,\ldots,x_l\} \subset X$ is a finite set of points where the curvature densities, $|F_{A_m}|^2$, concentrate as $m\to\infty$. If $Y$ is a closed, smooth manifold with Riemannian metric $h$, we let $\Inj(Y,h)$ denote its injectivity radius.

\begin{prop}[$L_{\loc}^p$ continuity of the least eigenvalue of $d_A^+d_A^{+,*}$ with respect to the connection for $2\leq p < 4$]
\label{prop:Lp_loc_continuity_least_eigenvalue_wrt_connection}
(See \cite[Proposition 34.14]{Feehan_yang_mills_gradient_flow_v2}.)
Let $X$ be a closed, connected, four-dimensional, oriented, smooth manifold with Riemannian metric, $g$. Then there are a positive constant $c = c(g) \in [1, \infty)$ and a constant $\eps = \eps(g) \in (0,1]$ such that the following holds. Let $G$ be a compact Lie group, $A_0$ a connection of class $H^1$ on a principal $G$-bundle $P_0$ over $X$ obeying the curvature bound \eqref{eq:L2norm_FA+_leq_small} with constant $\eps$, and $L\geq 1$ an integer, and $p \in [2,4)$. Then there are constants $c_p = c_p(g,p) \in [1, \infty)$ and $\delta = \delta(\mu(A_0),g,L,p) \in (0,1]$ and $\rho_0 = \rho_0(\mu(A_0),g,L)\in (0, 1\wedge \Inj(X,g)]$ with the following significance. Let $\rho \in (0,\rho_0]$ and $\Sigma = \{x_1,\ldots,x_L\} \subset X$ be such that
$$
\dist_g(x_l,x_k) \geq \rho \quad\hbox{for all } k\neq l,
$$
and let $U \subset X$ be the open subset given by
$$
U := X \less \bigcup_{l=1}^L \bar B_{\rho/2}(x_l).
$$
Let $P$ be a principal $G$-bundle over $X$ such that there is an isomorphism of principal $G$-bundles, $u:P\restriction X\less\Sigma \cong P_0\restriction X\less\Sigma$, and identify $P\restriction X\less\Sigma$ with $P_0\restriction X\less\Sigma$ using this isomorphism. Let $A$ be a connection of class $H^1$ on $P$ obeying the curvature bound \eqref{eq:L2norm_FA+_leq_small} with constant $\eps$ such that
\begin{equation}
\label{eq:Lp_norm_AminusA0_U_leq_small}
\|A-A_0\|_{L^p(U)} \leq \delta.
\end{equation}
Then $\mu(A)$ in \eqref{eq:Least_eigenvalue_dA+dA+*} satisfies the \emph{lower} bound,
\begin{multline}
\label{eq:Lower_bound_sqrt_muA_for_A_Lp_loc_near_A0}
\sqrt{\mu(A)}
\geq
\sqrt{\mu(A_0)} - c\sqrt{L}\,\rho^{1/6}(\mu(A)+1)
\\
- cL\rho\left(\sqrt{\mu(A)}+1\right) - c_p\|A-A_0\|_{L^p(U)}(\mu(A) + 1),
\end{multline}
and \emph{upper} bound,
\begin{multline}
\label{eq:Upper_bound_sqrt_muA_for_A_Lp_loc_near_A0}
\sqrt{\mu(A)}
\leq
\sqrt{\mu(A_0)} + c\sqrt{L}\,\rho^{1/6}(\mu(A_0)+1)
\\
+ cL\rho\left(\sqrt{\mu(A_0)}+1\right) + c_p\|A-A_0\|_{L^p(U)}(\mu(A_0) + 1).
\end{multline}
\end{prop}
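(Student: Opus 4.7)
\emph{Proof proposal.} The plan is to exploit the variational characterization
\[
\sqrt{\mu(A)} = \inf_{v \neq 0}\frac{\|d_A^{+,*_g} v\|_{L^2(X)}}{\|v\|_{L^2(X)}}
\]
and the analogous formula for $\mu(A_0)$, together with a test-function construction that transports an approximate minimizer from one connection to the other across the bundle isomorphism $u: P_0\restriction (X\less\Sigma) \cong P\restriction (X\less\Sigma)$. For the upper bound \eqref{eq:Upper_bound_sqrt_muA_for_A_Lp_loc_near_A0}, let $v_0 \in W_{A_0}^{2,2}(X; \Lambda^{+,g}\otimes\ad P_0)$ be a unit-$L^2$ eigenform of $d_{A_0}^{+,g} d_{A_0}^{+,*_g}$ with eigenvalue $\mu(A_0)$. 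Pick $\chi \in C^\infty(X;[0,1])$ vanishing on each $\bar B_{\rho/4}(x_l)$, equal to $1$ outside $B_{\rho/2}(x_l)$, and satisfying $|\nabla\chi|\leq c/\rho$; extend $\chi \cdot u(v_0)$ by zero across $\Sigma$ to obtain $\tilde v \in W_A^{1,2}(X; \Lambda^{+,g}\otimes\ad P)$, to be used as a test form for $d_A^{+,*_g}$.

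The Rayleigh bound $\sqrt{\mu(A)} \leq \|d_A^{+,*_g}\tilde v\|_{L^2(X)}/\|\tilde v\|_{L^2(X)}$ is then analyzed by decomposing, on the common open set $U$ where $u$ identifies $A$ with a connection on $P_0$,
\[
d_A^{+,*_g}\tilde v = \chi\, d_{A_0}^{+,*_g} v_0 + E_\chi(v_0) + R_{A-A_0}(\chi v_0),
\]
where $E_\chi$ collects the $\nabla\chi$-terms and $R_{A-A_0}$ encodes the connection difference. The first term contributes the principal value $\sqrt{\mu(A_0)}$. The cutoff error obeys
\[
\|E_\chi(v_0)\|_{L^2(X)}^2
\lesssim \rho^{-2}\int_{\bigcup_l B_{\rho/2}(x_l)} |v_0|^2\,d\vol_g,
\]
and H\"older's inequality combined with the $L^q$ regularity furnished by Lemma~\ref{lem:Feehan_5-3-1_Lp_dA+dA+*} applied to $v_0$ (giving $\|v_0\|_{L^q(X)}\lesssim \mu(A_0)+1$ for each $q \in [2,\infty)$) yields, after balancing $\rho^{-2}$ against the total volume $\sim L\rho^4$ of the cutoff annulus, the contribution $c\sqrt{L}\rho^{1/6}(\mu(A_0)+1)$. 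The connection difference term satisfies
\[
\|R_{A-A_0}(\chi v_0)\|_{L^2(U)}
\leq c\|A-A_0\|_{L^p(U)}\|v_0\|_{L^{2p/(p-2)}(X)}
\leq c_p \|A-A_0\|_{L^p(U)}(\mu(A_0)+1),
\]
where the bound $\|v_0\|_{L^{2p/(p-2)}} \lesssim \mu(A_0)+1$ again comes from Lemma~\ref{lem:Feehan_5-3-1_Lp_dA+dA+*}, with the dual exponent $2p/(p-2) \in (4,\infty)$ lying in the admissible range precisely because $p < 4$. A parallel analysis bounds the denominator from below by $\|\tilde v\|_{L^2(X)} \geq 1 - \|(1-\chi)v_0\|_{L^2(X)}$, where the deficit is controlled by the same volume-plus-$L^q$ estimate and accounts for the $L\rho$ term. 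Collecting these estimates and taking a square root yields \eqref{eq:Upper_bound_sqrt_muA_for_A_Lp_loc_near_A0}.

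The lower bound \eqref{eq:Lower_bound_sqrt_muA_for_A_Lp_loc_near_A0} is obtained by the symmetric procedure: take a unit-$L^2$ eigenform $v$ of $d_A^{+,g}d_A^{+,*_g}$ on $P$, form $\chi\cdot u^{-1}(v)$, and use it as a test form for $d_{A_0}^{+,*_g}$. The $L^q$ bounds on $v$ now depend on $\mu(A)$ via Lemma~\ref{lem:Feehan_5-3-1_Lp_dA+dA+*}, which is why the error terms in \eqref{eq:Lower_bound_sqrt_muA_for_A_Lp_loc_near_A0} are phrased in terms of $\mu(A)$ rather than $\mu(A_0)$. The thresholds $\rho_0 = \rho_0(\mu(A_0), g, L)$ and $\delta = \delta(\mu(A_0), g, L, p)$ are then chosen small enough that the cutoff and connection-difference errors are genuinely subordinate to $\sqrt{\mu(A_0)}$ in the final absorption step.

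The principal technical obstacle is that $P$ and $P_0$ differ topologically and are only isomorphic on $X\less\Sigma$, so one cannot appeal to global elliptic regularity on $X$; every $L^q$ bound on an eigenform must be produced on the common open subset and then interpolated against the small volume of the cutoff annulus, with the H\"older exponent $q$ optimized to produce the stated power of $\rho$. A secondary subtlety is a bootstrap: \apriori one only knows that $A$ has $L^2$-small $F_A^{+,g}$, so before Lemma~\ref{lem:Feehan_5-3-1_Lp_dA+dA+*} can be used on eigenforms of $A$ one needs $\mu(A) > 0$; this forces the dependence of $\delta$ and $\rho_0$ on $\mu(A_0)$ to propagate, via the upper-bound half of the argument applied first, into a usable positive lower bound on $\mu(A)$.
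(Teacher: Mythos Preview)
Your approach is essentially the paper's: cut off an eigenform near the bubble points, transport it via $u$, and compare Rayleigh quotients using the $L^q$ regularity of eigenforms (Lemma~\ref{lem:Feehan_5-3-1_Lp_dA+dA+*}) together with H\"older on the connection-difference term; the paper organizes this through a partition-of-unity identity for $\|d_A^{+,*}v\|_{L^2}^2$ rather than a single cutoff, but the estimates are the same. Two minor corrections to your closing remarks: the $L^q$ bounds on the eigenforms are obtained \emph{globally} on $X$ (for $v_0$ on $P_0$ and for $v$ on $P$ separately), not on the common open subset, so there is no obstacle to global elliptic regularity; and Lemma~\ref{lem:Feehan_5-3-1_Lp_dA+dA+*} requires only $\|F_A^{+,g}\|_{L^2(X)}\leq\eps$, not $\mu(A)>0$, so no bootstrap is needed there.
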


\begin{proof}
Since the argument is lengthy, we divide it into several steps.

\setcounter{step}{0}
\begin{step}[The eigenvalue identity]
By hypothesis, $X = U \cup (\cup_{l=1}^L \bar B_{\rho/2}(x_l))$ and we may choose a $C^\infty$ partition of unity, $\{\chi_l\}_{l=0}^L$, for $X$ subordinate to the open cover of $X$ given by $U$ and the open balls, $B_\rho(x_l)$ for $1\leq l\leq L$, such that $\chi_l = 1$ on $B_{\rho/2}(x_l)$ and $\supp\chi_l \subset B_\rho(x_l)$ for $1\leq l\leq L$ and $\supp\chi_0 \subset U$, while $\sum_{l=0}^L\chi_l = 1$ on $X$ and $0\leq \chi_l\leq 1$ for $0\leq l\leq L$. In addition, we may suppose that there is a constant, $c=c(g)$, such that
\begin{equation}
\label{eq:Pointwise_bound_dchi_l}
|d\chi_l| \leq \frac{c}{\rho} \quad\hbox{on } X, \quad\hbox{for } 0 \leq l \leq L.
\end{equation}
We consider $v \in \Omega^+(X;\ad P)$ and write $v = \sum_{l=0}^L\chi_l v$. Because $\supp\chi_k \cap \supp\chi_l = \emptyset$ for all $k\neq l$ with $1 \leq k,l \leq L$ and $\chi_0 = 1-\chi_l$ on $B_\rho(x_l)$ for $1\leq l \leq l$, we see that
\begin{align*}
\|d_A^{+,*}v\|_{L^2(X)}^2
&=
\sum_{l=0}^L\|d_A^{+,*}(\chi_lv)\|_{L^2(X)}^2
+ 2\sum_{k<l} \left(d_A^{+,*}(\chi_kv), d_A^{+,*}(\chi_lv)\right)_{L^2(X)}
\\
&=
\sum_{l=0}^L\|d_A^{+,*}(\chi_lv)\|_{L^2(X)}^2
+ 2\sum_{l=1}^L \left(d_A^{+,*}(\chi_0v), d_A^{+,*}(\chi_lv)\right)_{L^2(X)}
\\
&=
\sum_{l=0}^L\|d_A^{+,*}(\chi_lv)\|_{L^2(X)}^2
+ 2\sum_{l=1}^L \left(d_A^{+,*}((1-\chi_l)v), d_A^{+,*}(\chi_lv)\right)_{L^2(X)},
\end{align*}
and hence,
$$
\|d_A^{+,*}v\|_{L^2(X)}^2
= \|d_A^{+,*}(\chi_0v)\|_{L^2(X)}^2 - \sum_{l=1}^L\|d_A^{+,*}(\chi_lv)\|_{L^2(X)}^2
+ 2\sum_{l=1}^L \left(d_A^{+,*}v, d_A^{+,*}(\chi_lv)\right)_{L^2(X)}.
$$
We now choose $v \in H_A^1(X;\Lambda^+\otimes\ad P)$ with $\|v\|_{L^2(X)}=1$ to be an eigenvector for the least eigenvalue $\mu(A)$ of $d_A^+d_A^{+,*}$. Hence, the preceding identity and \eqref{eq:Least_eigenvalue_dA+dA+*} yield
\begin{equation}
\label{eq:Eigenvalue_identity_muA_for_A_Lp_loc_near_A0}
\mu(A) = \|d_A^{+,*}(\chi_0v)\|_{L^2(X)}^2 - \sum_{l=1}^L\|d_A^{+,*}(\chi_lv)\|_{L^2(X)}^2
+ 2\sum_{l=1}^L \left(d_A^{+,*}v, d_A^{+,*}(\chi_lv)\right)_{L^2(X)},
\end{equation}
the basic eigenvalue identity for $\mu(A)$.
\end{step}

To proceed further, we need a lower bound for the expression $\|d_A^{+,*}(\chi_0v)\|_{L^2(X)}^2$ in \eqref{eq:Eigenvalue_identity_muA_for_A_Lp_loc_near_A0} in terms of $\mu(A_0)$ and small upper bounds for the remaining terms on the right-hand side of \eqref{eq:Eigenvalue_identity_muA_for_A_Lp_loc_near_A0}.

\begin{step}[Upper bound for the terms $(d_A^{+,*}v, d_A^{+,*}(\chi_lv))_{L^2(X)}$ when $1\leq l\leq L$]
Observe that
$$
\left(d_A^{+,*}v, d_A^{+,*}(\chi_lv)\right)_{L^2(X)}
=
\left(d_A^+d_A^{+,*}v, \chi_lv\right)_{L^2(X)}
=
\mu(A)\left(v, \chi_lv\right)_{L^2(X)},
$$
and thus, for $1\leq l\leq L$,
\begin{align*}
\left|\left(d_A^{+,*}v, d_A^{+,*}(\chi_lv)\right)_{L^2(X)}\right|
&\leq
\mu(A) \|\sqrt{\chi_l} v\|_{L^2(X)}^2
\\
&\leq
\mu(A) \left(\Vol_g(\supp\chi_l)\right)^{1/2}\|v\|_{L^4(X)}^2
\\
&\leq c\rho^2\mu(A)\|v\|_{L^4(X)}^2
\\
&\leq c\rho^2\mu(A)\left(\|d_A^{+,*}v\|_{L^2(X)}+\|v\|_{L^2(X)}\right)^2
\quad\hbox{(by \eqref{eq:Feehan_Leness_6-6-1_L4v_L2dA+*v})}.
\end{align*}
Thus, noting that $\|d_A^{+,*}v\|_{L^2(X)} = \sqrt{\mu(A)}\|v\|_{L^2(X)}$ by \eqref{eq:Least_eigenvalue_dA+dA+*} and $\|v\|_{L^2(X)}=1$, we have
\begin{equation}
\label{eq:Upper_bound_dA*v_dA*chiv_L2innerproduct_for_A_Lp_loc_near_A0}
\left|\left(d_A^{+,*}v, d_A^{+,*}(\chi_lv)\right)_{L^2(X)}\right|
\leq
c\rho^2\mu(A)(1+\mu(A)), \quad\hbox{for } 1\leq l\leq L,
\end{equation}
where $c=c(g)$.
\end{step}

\begin{step}[Upper bound for the terms $\|d_A^{+,*}(\chi_lv)\|_{L^2(X)}^2$ when $1\leq l\leq L$]
For $0\leq l\leq L$, we have
$$
d_{A}^{+,*}(\chi_{l}v)
=
-*d_{A}*(\chi_{l}v)
=
-*(d\chi_{l}\wedge v + \chi_{l}d_{A}v)
=
-*(d\chi_{l}\wedge v) + \chi_{l}d_{A}^{+,*}v.
$$
Hence, for $s \in (2,4)$ and $t\in (4,\infty)$ obeying $1/2=1/s+1/t$, we have
$$
\left| \|d_A^{+,*}(\chi_lv)\|_{L^2(X)} - \|\chi_ld_A^{+,*}v\|_{L^2(X)} \right|
\leq
\|d\chi_l\|_{L^s(X)} \|v\|_{L^t(X)}.
$$
The pointwise bound \eqref{eq:Pointwise_bound_dchi_l} for $d\chi_l$ implies that there is a positive constant, $c = c(g)$ when $1\leq l\leq L$ and $c = Lc_0(g)$ when $l=0$, such that, for any $u \in [1,\infty]$,
\begin{equation}
\label{eq:Lp_norm_dchi_l}
\|d\chi_l\|_{L^u(X)} \leq c\rho^{(4/u)-1}, \quad 0\leq l\leq L.
\end{equation}
We choose $s=u=3$ and $t=6$ and combine the two preceding inequalities to give
\begin{align*}
{}&\left| \|d_A^{+,*}(\chi_lv)\|_{L^2(X)} - \|\chi_ld_A^{+,*}v\|_{L^2(X)} \right|
\\
&\quad \leq
c\rho^{1/3} \|v\|_{L^6(X)}
\\
&\quad \leq c\rho^{1/3} \left(\|d_A^+d_A^{+,*}v\|_{L^{3/2}(X)} + \|v\|_{L^{3/2}(X)}\right)
\quad\hbox{(by \eqref{eq:Feehan_5-3-1_Lp_dA+dA+*} with $r=3/2$)}
\\
&\quad = c\rho^{1/3}(\mu(A) + 1)\|v\|_{L^{3/2}(X)},
\end{align*}
where we used the fact that $d_A^+d_A^{+,*}v = \mu(A)v$ in the last equality. Thus, for a positive constant, $c = c(g)$ when $1\leq l\leq L$ and $c = Lc_0(g)$ when $l=0$,
\begin{equation}
\label{eq:L2norm_dA+*chi_ellv_minus_L2norm_chi_elldA+*v}
\left| \|d_A^{+,*}(\chi_lv)\|_{L^2(X)} - \|\chi_ld_A^{+,*}v\|_{L^2(X)} \right|
\leq c\rho^{1/3}(\mu(A) + 1), \quad\hbox{for } 0\leq l\leq L.
\end{equation}
Restricting now to $1\leq l\leq L$, we have
\begin{align*}
\|\chi_ld_A^{+,*}v\|_{L^2(X)}^2
&=
\left(\chi_l^2 d_A^{+,*}v, d_A^{+,*}v\right)_{L^2(X)}
\\
&=
\left(\chi_l^2 d_A^+d_A^{+,*}v
+ 2\chi_l\left(d\chi_l\wedge d_A^{+,*}v\right)^+, v\right)_{L^2(X)}
\\
&=
\mu(A)\|\chi_lv\|_{L^2(X)}^2 + 2\left(\chi_ld\chi_l\wedge d_A^{+,*}v, v\right)_{L^2(X)}
\\
&\leq \mu(A)(\Vol_g(\supp\chi_l))^{1/2}\|v\|_{L^4(X)}^2
+ 2\left(\chi_ld\chi_l\wedge d_A^{+,*}v, v\right)_{L^2(X)}.
\end{align*}
Therefore, applying \eqref{eq:Feehan_Leness_6-6-1_L4v_L2dA+*v} in the preceding inequality together with the facts that $\|d_A^{+,*}v\|_{L^2(X)} = \sqrt{\mu(A)}\|v\|_{L^2(X)}$ by \eqref{eq:Least_eigenvalue_dA+dA+*} and $\|v\|_{L^2(X)} = 1$,
\begin{equation}
\label{eq:Upper_bound_chidA*v_L2norm_for_A_Lp_loc_near_A0}
\|\chi_ld_A^{+,*}v\|_{L^2(X)}^2
\leq
c\rho^2\mu(A)\left(\sqrt{\mu(A)} + 1\right)^2
+ 2\left(\chi_ld\chi_l\wedge d_A^{+,*}v, v\right)_{L^2(X)},
\quad\hbox{for } 1\leq l\leq L.
\end{equation}
The inner product term in \eqref{eq:Upper_bound_chidA*v_L2norm_for_A_Lp_loc_near_A0} is bounded via
\begin{align*}
\left|\left(\chi_ld\chi_l\wedge d_A^{+,*}v, v\right)_{L^2(X)}\right|
&\leq
\|d\chi_l\|_{L^3(X)} \|d_A^{+,*}v\|_{L^2(X)} \|v\|_{L^6(X)}
\\
&=
\sqrt{\mu(A)}\, \|d\chi_l\|_{L^3(X)} \|v\|_{L^6(X)}
\quad\hbox{(by \eqref{eq:Least_eigenvalue_dA+dA+*})}
\\
&\leq c\rho^{1/3}\sqrt{\mu(A)}(\mu(A)+1)
\quad\hbox{(by \eqref{eq:Feehan_5-3-1_Lp_dA+dA+*} and \eqref{eq:Lp_norm_dchi_l}),}
\end{align*}
with $q=6$ and $r=3/2$ in \eqref{eq:Feehan_5-3-1_Lp_dA+dA+*}.
Hence, substituting the preceding inequality in \eqref{eq:Upper_bound_chidA*v_L2norm_for_A_Lp_loc_near_A0} yields
$$
\|\chi_ld_A^{+,*}v\|_{L^2(X)}^2
\leq
c\left(\rho^2\mu(A) + \rho^{1/3}\sqrt{\mu(A)}\right)(\mu(A)+1),
\quad\hbox{for } 1\leq l\leq L.
$$
By combining the preceding estimate with \eqref{eq:L2norm_dA+*chi_ellv_minus_L2norm_chi_elldA+*v} (and the elementary inequality, $x^2 \leq 2(x-y)^2 + 2y^2$ for $x,y\in\RR$) we obtain
\begin{align*}
\|d_A^{+,*}(\chi_lv)\|_{L^2(X)}^2
&\leq
2\left| \|d_A^{+,*}(\chi_lv)\|_{L^2(X)} - \|\chi_ld_A^{+,*}v\|_{L^2(X)} \right|^2
+ 2\|\chi_ld_A^{+,*}v\|_{L^2(X)}^2
\\
&\leq c\rho^{2/3}(\mu(A) + 1)^2
+ c\left(\rho^2\mu(A) + \rho^{1/3}\sqrt{\mu(A)}\right)(\mu(A)+1),
\end{align*}
and thus, noting that $\rho \in (0,1]$,
\begin{equation}
\label{eq:Upper_bound_dA*chiv_L2norm_for_A_Lp_loc_near_A0}
\|d_A^{+,*}(\chi_lv)\|_{L^2(X)}^2
\leq
c\rho^{1/3}(\mu(A)+1)^2, \quad\hbox{for } 1\leq l\leq L,
\end{equation}
for $c=c(g)$. This completes our analysis of all terms on the right-hand side of \eqref{eq:Eigenvalue_identity_muA_for_A_Lp_loc_near_A0} with $l\neq 0$.
\end{step}

\begin{step}[Lower bound for the term $\|d_A^{+,*}(\chi_0v)\|_{L^2(X)}$ and preliminary lower bound for $\mu(A)$]
Without loss of generality in the remainder of the proof, we may restrict attention to  $p \in (2,4]$. For convenience, we write  $a := A-A_0 \in H_{A_0}^1(X;\Lambda^1\otimes\ad P)$. For the term in \eqref{eq:Eigenvalue_identity_muA_for_A_Lp_loc_near_A0} with $l=0$, we note that $d_A^{+,*}v = d_{A_0}^{+,*}v - *(a\wedge v)$ on $X\less\Sigma$ and thus, for $p\in (2,4]$ and $q\in [4,\infty)$ defined by $1/2=1/p+1/q$ and $r\in [4/3,2)$ defined by $1/r = 1/2+1/q$,
\begin{align*}
{}&\left|\|d_A^{+,*}(\chi_0v)\|_{L^2(X)} - \|d_{A_0}^{+,*}(\chi_0v)\|_{L^2(X)}\right|
\\
&\quad \leq
\|*(a\wedge \chi_0v)\|_{L^2(X)}
\\
&\quad \leq 2\|a\|_{L^p(\supp\chi_0)} \|v\|_{L^q(X)}
\\
&\quad \leq c_p\|a\|_{L^p(U)} \left(\|d_A^+d_A^{+,*}v\|_{L^r(X)} + \|v\|_{L^r(X)}\right)
\quad\hbox{(by \eqref{eq:Feehan_5-3-1_Lp_dA+dA+*})}
\\
&\quad = c_p\|a\|_{L^p(U)} (\mu(A) + 1)\|v\|_{L^r(X)}
\quad\hbox{(by \eqref{eq:Least_eigenvalue_dA+dA+*})},
\end{align*}
where we used the fact that $\supp\chi_0 \subset U$ by construction and $c_p=c_p(g,p)$. Thus, noting that
$$
\|v\|_{L^r(X)} \leq \left(\Vol_g(X)\right)^{1/q}\|v\|_{L^2(X)},
$$
and $\|v\|_{L^2(X)}=1$, we see that
\begin{equation}
\label{eq:Upper_bound_dA*chi0v_L2norm_minus_dA0*chi0v_L2norm_for_A_Lp_loc_near_A0}
\left|\|d_A^{+,*}(\chi_0v)\|_{L^2(X)} - \|d_{A_0}^{+,*}(\chi_0v)\|_{L^2(X)}\right|
\leq
c_p\|a\|_{L^p(U)} (\mu(A) + 1),
\end{equation}
for a positive constant, $c_p=c_p(g,p)$. Next, we observe that
\begin{align*}
\|d_{A_0}^{+,*}(\chi_0v)\|_{L^2(X)}
&\geq
\sqrt{\mu(A_0)}\|\chi_0v\|_{L^2(X)} \quad\hbox{(by \eqref{eq:Least_eigenvalue_dA+dA+*})}
\\
&\geq
\sqrt{\mu(A_0)}\left( \|v\|_{L^2(X)} - \sum_{l=1}^L \|\chi_lv\|_{L^2(X)}\right)
\\
&\geq
\sqrt{\mu(A_0)}\left( \|v\|_{L^2(X)}
- \|v\|_{L^4(X)}\sum_{l=1}^L \left(\Vol_g(\supp\chi_l)\right)^{1/4} \right)
\\
&\geq \sqrt{\mu(A_0)}\left( \|v\|_{L^2(X)}
- cL\rho\|v\|_{L^4(X)}\right)
\\
&\geq \sqrt{\mu(A_0)}\|v\|_{L^2(X)}
- cL\rho\left(\|d_A^{+,*}v\|_{L^2(X)}+\|v\|_{L^2(X)}\right),
\end{align*}
where $c=c(g)$ and we used \eqref{eq:Feehan_Leness_6-6-1_L4v_L2dA+*v} to obtain the preceding inequality. Therefore, because $\|d_A^{+,*}v\|_{L^2(X)} = \sqrt{\mu(A)}\|v\|_{L^2(X)}$ by \eqref{eq:Least_eigenvalue_dA+dA+*} and $\|v\|_{L^2(X)}=1$,
\begin{equation}
\label{eq:Lower_bound_dA*chi0v_L2norm_for_A_Lp_loc_near_A0}
\|d_{A_0}^{+,*}(\chi_0v)\|_{L^2(X)}
\geq \sqrt{\mu(A_0)} - cL\rho\left(\sqrt{\mu(A)}+1\right).
\end{equation}
Observe that
\[
\|d_{A_0}^{+,*}(\chi_0v)\|_{L^2(X)}
\leq
\|d_A^{+,*}(\chi_0v)\|_{L^2(X)}
+ \left| \|d_{A_0}^{+,*}(\chi_0v)\|_{L^2(X)} - \|d_A^{+,*}(\chi_0v)\|_{L^2(X)} \right|.
\]
We rewrite the preceding inequality and combine with \eqref{eq:Upper_bound_dA*chi0v_L2norm_minus_dA0*chi0v_L2norm_for_A_Lp_loc_near_A0} and
\eqref{eq:Lower_bound_dA*chi0v_L2norm_for_A_Lp_loc_near_A0} to give
\begin{align*}
\|d_A^{+,*}(\chi_0v)\|_{L^2(X)}
&\geq
\|d_{A_0}^{+,*}(\chi_0v)\|_{L^2(X)}
- \left|\|d_A^{+,*}(\chi_0v)\|_{L^2(X)} - \|d_{A_0}^{+,*}(\chi_0v)\|_{L^2(X)}\right|
\\
&\geq \sqrt{\mu(A_0)} - cL\rho\left(\sqrt{\mu(A)}+1\right)
- c_p\|a\|_{L^p(U)}(\mu(A) + 1),
\end{align*}
for positive constants, $c=c(g)$ and $c_p=c_p(g,p)$. We substitute the preceding inequality, together with \eqref{eq:Upper_bound_dA*v_dA*chiv_L2innerproduct_for_A_Lp_loc_near_A0} and \eqref{eq:Upper_bound_dA*chiv_L2norm_for_A_Lp_loc_near_A0}, in \eqref{eq:Eigenvalue_identity_muA_for_A_Lp_loc_near_A0} to discover that $\mu(A)$ obeys
\begin{multline}
\label{eq:Lower_bound_sqrt_muA_for_A_Lp_loc_near_A0_preliminary}
\mu(A)
\geq
\left(\sqrt{\mu(A_0)} - cL\rho\left(\sqrt{\mu(A)}+1\right)
- c_p\|a\|_{L^p(U)}(\mu(A) + 1)\right)^2
\\
- cL\rho^2\mu(A)(1+\mu(A)) - cL\rho^{1/3}(\mu(A)+1)^2,
\end{multline}
a preliminary lower bound for $\mu(A)$, where $c=c(g)$ and $c_p=c_p(g,p)$.
\end{step}

\begin{step}[Upper and lower bounds for $\mu(A)$]
The inequality \eqref{eq:Lower_bound_sqrt_muA_for_A_Lp_loc_near_A0_preliminary} implies an upper bound for $\mu(A_0)$ in terms of $\mu(A)$ and hence, by interchanging the roles of $A$ and $A_0$, an upper bound for $\mu(A)$ in terms of $\mu(A_0)$. Therefore, regarding $A_0$ as fixed, for small enough $\delta = \delta(\mu(A_0),g,L,p) \in (0,1]$ and $\rho_0 = \rho_0(\mu(A_0),g,L)\in (0, 1\wedge \Inj(X,g)]$, recalling that $\rho \in (0,\rho_0]$ and $\|a\|_{L^p(U)} \leq \delta$ by hypothesis, we may suppose that
$$
\sqrt{\mu(A_0)} - cL\rho\left(\sqrt{\mu(A)}+1\right)
- c_p\|a\|_{L^p(U)}(\mu(A) + 1) \geq 0.
$$
Thus, using the elementary inequality, $(x+y)^{1/2} \leq x^{1/2} + y^{1/2}$ for $x,y \geq 0$, we obtain from \eqref{eq:Lower_bound_sqrt_muA_for_A_Lp_loc_near_A0_preliminary} that
\begin{multline*}
\sqrt{\mu(A)} + \left(cL\rho^2\mu(A)(1+\mu(A)) + cL\rho^{1/3}(\mu(A)+1)^2\right)^{1/2}
\\
\geq
\sqrt{\mu(A_0)} - cL\rho\left(\sqrt{\mu(A)}+1\right)
- c_p\|a\|_{L^p(U)}(\mu(A) + 1).
\end{multline*}
The preceding inequality yields the desired lower bound \eqref{eq:Lower_bound_sqrt_muA_for_A_Lp_loc_near_A0} for $\mu(A)$, after an another application of the elementary inequality, $(x+y)^{1/2} \leq x^{1/2} + y^{1/2}$ for $x,y \geq 0$ to give
\begin{align*}
{}&\left(cL\rho^2\mu(A)(1+\mu(A)) + cL\rho^{1/3}(\mu(A)+1)^2\right)^{1/2}
\\
&\quad \leq c\sqrt{L}\,\rho\sqrt{\mu(A)}\left(1+\sqrt{\mu(A)}\right) + c\sqrt{L}\,\rho^{1/6}(\mu(A)+1)
\\
&\quad \leq c\sqrt{L}\,\rho^{1/6}(\mu(A)+1).
\end{align*}
Interchanging the roles of $A$ and $A_0$ in the preceding inequality yields the desired upper bound \eqref{eq:Upper_bound_sqrt_muA_for_A_Lp_loc_near_A0} for $\mu(A)$.
\end{step}

This completes the proof of Proposition \ref{prop:Lp_loc_continuity_least_eigenvalue_wrt_connection}.
\end{proof}

%
%

\bibliography{master,mfpde}
\bibliographystyle{amsplain}

\end{document}